\documentclass{amsart}
\usepackage{delimset, amssymb, enumitem, caption, xcolor}
\captionsetup{justification=centering, margin=0cm}
\setlist{itemsep=4pt, topsep=0pt, leftmargin=17pt, listparindent=11pt}

\usepackage[colorlinks, citecolor=cyan, linkcolor=magenta, pagebackref, urlcolor=brown, ocgcolorlinks]{hyperref}
\usepackage{tikz}
\tikzset{edge/.style={very thick, color=black}}
\tikzset{ball/.style={shape=circle, inner sep=9pt, ball color=black}}
\tikzset{apball/.style={ball color=black}}

\newcommand\ind{\mathord{\uparrow}}

\usepackage[capitalize]{cleveref}
\crefname{conjecture}{Conjecture}{Conjectures}
\newtheorem{theorem}{Theorem}[section]
\newtheorem{corollary}[theorem]{Corollary}
\newtheorem{lemma}[theorem]{Lemma}
\newtheorem{conjecture}[theorem]{Conjecture}
\newtheorem{proposition}[theorem]{Proposition}

\numberwithin{equation}{section}
\numberwithin{figure}{section}

\allowbreak
\allowdisplaybreaks

\parskip 8pt
\hoffset -25truemm
\oddsidemargin=25truemm
\evensidemargin=25truemm
\textwidth=155truemm
\voffset -25truemm
\topmargin=25truemm
\headheight=7truemm
\headsep=5truemm
\textheight=220truemm
\baselineskip=16pt

\usepackage[numbers, sort&compress, nonamebreak, merge, elide, longnamesfirst]{natbib}

\author[D.G.L.~Wang]{David G.L. Wang}
\address[David G.L. Wang]{School of Mathematics and Statistics \& Beijing Key Laboratory on MCAACI, Beijing Institute of Technology; MIIT Key Laboratory of Mathematical Theory and Computation in Information Security, Beijing 102400, P. R. China}
\email{glw@bit.edu.cn}

\author[M.M.Y. Wang]{Monica M.Y. Wang$^*$}
\address[Monica M.Y. Wang]{School of Mathematics and Statistics, Beijing Institute of 
Technology, Beijing 102400, P. R. China}
\email{mengyu919@bit.edu.cn}

\keywords{chromatic symmetric function; $e$-positivity; Stanley and Stembridge's 3+1 conjecture; symmetric functions in noncommuting variables}

\subjclass[2020]{05E05 05A15 05C15}

\thanks{Corresponding author: Monica M.Y. Wang. 
The first author is supported by the General Program of National Natural 
Science Foundation of China (Grant No.~12171034) and 
the Fundamental Research Funds for the Central Universities in China (Grant No.~2021CX11012).
The second author is supported by the Training Funds for Excellent Doctoral Thesis of Beijing Institute of Technology in China.}

\title[]{Two cycle-chord graphs are $e$-positive}

\begin{document}
\bibliographystyle{abbrvnat}
\maketitle
\begin{abstract}
We prove Gebhard and Sagan's $(e)$-positivity of the line graphs of tadpoles in noncommuting variables. This implies the $e$-positivity of these line graphs. We then 
extend this $(e)$-positivity result to that of certain cycle-chord graphs, and derive the bivariate generating function of all cycle-chord graphs.
\end{abstract}

\section{Introduction}

\citet{Sta95} introduced 
the {\it chromatic symmetric function}
for a simple graph $G$ as
\[
X_G
=\sum_{\kappa}\prod_{v\in V(G)}\mathbf{x}_{\kappa(v)}
\]
where $\mathbf{x}=(x_1, x_2, \ldots)$ is a countable set
of indeterminates, 
and the sum is over all proper colorings~$\kappa$ of $G$.
Chromatic symmetric functions is a generalization of \citeauthor{Bir12}'s chromatic polynomials $\chi_G(k)$ since
\[
X_G(1^k,0,0,\dots)=\chi_G(k),
\]
see \citet{Bir12}.

Chromatic symmetric functions are particular symmetric functions.
It is the fundamental theorem of symmetric functions that $\{e_\lambda\}$ is a basis of the algebra $\Lambda(x_1,x_2,\ldots)$ of symmetric functions, where
\[
e_\lambda
=e_{\lambda_1}e_{\lambda_2}\dotsm \quad\text{and}\quad
e_n
=\sum_{1\le i_1<\dots<i_n}x_{i_1}\dotsm x_{i_n}.
\]
The algebra $\Lambda(x_1,x_2,\ldots)$ has also some other bases such like the Schur basis $\{s_\lambda\}$.
For any bases $\{b_\lambda\}$,
a symmetric function is \emph{$b$-positive}
if its expansion under the basis~$b_{\lambda}$ has only nonnegative coefficients, see \citet{Mac95B} and \citet[Chapter 7]{Sta99B}.
A graph is said to be $b$-positive if its chromatic symmetric function is $b$-positive.
A class of graphs is said to be $b$-positive if every graph in the class is $b$-positive.

This work is originally motivated by Stanley and Stembridge's 3+1 conjecture, see~\citet{SS93}.

\begin{conjecture}[\citeauthor{SS93}]\label{conj:ep:cf-inc}
Any claw-free incomparability graph is $e$-positive.
\end{conjecture}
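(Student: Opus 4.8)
The plan is to reduce \cref{conj:ep:cf-inc} to a sharply restricted subclass and then attack that subclass with the noncommuting-variable machinery this paper develops. First I would invoke the standard poset-graph dictionary: a graph is a claw-free incomparability graph precisely when it is the incomparability graph $\mathrm{inc}(P)$ of a $(\mathbf{3}+\mathbf{1})$-free poset $P$, since a claw in $\mathrm{inc}(P)$ corresponds exactly to an element incomparable to a three-element chain. By Guay-Paquet's modular relation, the chromatic symmetric function of an arbitrary $(\mathbf{3}+\mathbf{1})$-free poset is a convex combination of those of posets that are simultaneously $(\mathbf{3}+\mathbf{1})$- and $(\mathbf{2}+\mathbf{2})$-free, i.e.\ natural unit interval orders. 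Because $e$-positivity is preserved under nonnegative combinations, it suffices to prove $e$-positivity for the indifference graphs of such orders, a class with a clean parametrization by Dyck paths (equivalently Hessenberg functions).

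Second, on this subclass I would run an induction that mirrors the strategy of this paper. A natural unit interval graph can be assembled from smaller ones by a controlled sequence of vertex- and edge-additions, and Gebhard and Sagan's symmetric functions in noncommuting variables retain enough local information to track how $X_G$ transforms under each such move. The aim is to prove the stronger $(e)$-positivity statement in the noncommuting algebra, which specializes to $e$-positivity upon abelianization, exactly as the line-graph and cycle-chord results here do. Concretely, I would look for a basis-free expansion of $X_G$ in noncommuting $e$-analogues whose coefficients are enumerated by a family of tableau-like objects, generalizing Gasharov's $P$-tableaux, which already deliver Schur-positivity for this very class.

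The hard part — and the reason the conjecture has resisted proof for three decades — is the inductive step itself. None of the natural local moves (deletion-contraction, single-edge addition, gluing at a vertex) preserves $e$-positivity of the intermediate graphs, so the induction cannot pass naively through arbitrary unit interval graphs; the positive $e$-expansion materializes only at the end, never along the way. The essential obstacle is therefore to produce a manifestly nonnegative combinatorial certificate for the full $e$-coefficients, uniformly across the class, rather than a recursive one. I expect the noncommuting-variable refinement exploited in this paper to be the correct bookkeeping device, with the crux being to identify the right family of noncommuting $e$-tableaux that renders the positivity \emph{local}; an equivalent geometric resolution would be a permutation-invariant splitting of the cohomology of the associated regular semisimple Hessenberg variety, under the Shareshian-Wachs dot action, into pieces indexed by the $e_\lambda$. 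Until such a certificate is exhibited, neither the reduction of the first step nor the bookkeeping of the second can be closed.
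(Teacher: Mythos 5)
This statement is \cref{conj:ep:cf-inc}, which the paper does not prove: it is stated as an open conjecture (the Stanley--Stembridge $(3+1)$ conjecture), and the paper's actual results only establish $(e)$-positivity for the very restricted families $L(\mathrm{Tp}_{m,l})$ and $\mathrm{CC}_{m,3}$, partly as evidence toward \cref{conj:Schur:cf}. Your proposal is likewise not a proof, and you say so yourself in the final sentence. The first step is fine as far as it goes: the identification of claw-free incomparability graphs with incomparability graphs of $(\mathbf{3}+\mathbf{1})$-free posets is standard, Guay-Paquet's modular-law argument does reduce the conjecture to unit interval graphs, and $e$-positivity is indeed preserved under convex combinations. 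But this step only trades one open problem for another; the entire mathematical content of the conjecture now sits in your second step, and there you supply no argument. You do not exhibit the ``family of noncommuting $e$-tableaux,'' do not define the inductive decomposition of a unit interval graph into moves whose effect on $Y_G$ you can control, and do not prove positivity is preserved under any such move --- indeed you argue that the natural moves do \emph{not} preserve positivity, which is precisely why no induction is set up. A proof sketch whose key lemma is ``find a manifestly nonnegative certificate'' is a restatement of the problem, not a solution to it.

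A further concrete risk in your plan: you propose to prove the \emph{stronger} statement that unit interval graphs are $(e)$-positive in the Gebhard--Sagan sense. That strengthening is not known to hold for the whole class, and nothing in the literature (or in this paper) suggests it does; Gebhard and Sagan themselves obtained $(e)$-positivity only for paths, cycles, and $K$-chains, and this paper extends that list only to line graphs of tadpoles and $\mathrm{CC}_{m,3}$, in each case by ad hoc deletion--contraction computations with the induction operator $\ind$ that visibly depend on the graph having a single chord. So even granting your reduction, your intended route could founder on the strengthened statement being false, while the original conjecture remained true. In short: step one is a correct but known reduction, step two is the open problem itself, and the gap is not a missing detail but the theorem.
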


Only a few methods are known to show the $e$-positivity of a graph, while there are many results on the $e$-positivity of  particular graph classes.
\citet{Wol97D} provided a powerful criterion that
every connected $e$-positive graph has a connected partition of every type.
Graph classes that are shown to be $e$-positive include
complete graphs, paths, cycles, 
generalized bull graphs, 
$K$-chains, lollipop graphs, triangular ladders,
Ferrers graphs;
see \cite{Cv16, Sta95, CH18, CH19, FHM19, Dah19, Dv18, GS01, Ev04,  LY21}.
Graphs that are proved not to be $e$-positive include
generalized nets,
saltire graphs $\mathrm{SA}_{n,n}$,
augmented saltire graphs $AS_{n,n}$ and $AS_{n,n+1}$,
and triangular tower graphs $TT_{n,n,n}$;
see \cite{DFv20, FHM19}.
\citet{DFv20} gave an infinite number of families of non-$e$-positive graphs that are not contractible to the claw;
one such family is additionally claw-free, thus establishing that the $e$-positivity is in general not dependent on the existence of an induced claw or of a contraction to a claw.

Note that $e$-positive graphs are Schur positive,
since every element $e_\lambda$ is a linear combination of elements $s_\mu$ over Kostka numbers,
see~\citet[Exercise 2.12]{MR15B}.
\citet{Gas96P} obtained the Schur positivity of the graphs in \cref{conj:ep:cf-inc} by smart bijections, see also~\citet[Theorem 6.3]{SW16}.
\begin{theorem}[\citeauthor{Gas99}]\label{thm:Schur:cf-inc}
Any claw-free incomparability graph is Schur positive.
\end{theorem}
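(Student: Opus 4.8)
The plan is to prove the stronger, manifestly positive statement that supplies an explicit combinatorial formula for the Schur coefficients, following the strategy of \citeauthor{Gas99}. First I would reduce to posets: a graph is a claw-free incomparability graph precisely when it is the incomparability graph $\mathrm{inc}(P)$ of a $(3+1)$-free poset $P$, since a claw in $\mathrm{inc}(P)$ is exactly a vertex incomparable to each element of a $3$-element chain. So it suffices to show, for every $(3+1)$-free poset $P$ on $n$ elements, that
\[
X_{\mathrm{inc}(P)}=\sum_{\lambda\vdash n}a_{P,\lambda}\,s_\lambda,
\]
where $a_{P,\lambda}$ counts the $P$-\emph{tableaux} of shape $\lambda$: bijective fillings $T$ of the Young diagram of $\lambda$ by the elements of $P$ such that each row increases in $P$ reading left to right ($T_{i,j}<_P T_{i,j+1}$), while in each column the lower entry is never strictly less in $P$ than the entry above it ($T_{i+1,j}\not<_P T_{i,j}$). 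Such a count is nonnegative by definition, so the formula immediately yields Schur positivity.

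To access the Schur coefficients I would pair with the Hall inner product, using $a_{P,\lambda}=\langle X_{\mathrm{inc}(P)},s_\lambda\rangle$ together with the Jacobi--Trudi expansion $s_\lambda=\det\bigl(h_{\lambda_i-i+j}\bigr)$ and the duality $\langle h_\mu,m_\nu\rangle=\delta_{\mu\nu}$. Since a proper coloring of $\mathrm{inc}(P)$ is exactly a partition of $P$ into color classes that are chains, the quantity $\langle X_{\mathrm{inc}(P)},h_\mu\rangle$ equals the number of ordered chain-partitions $(C_1,\dots,C_\ell)$ of $P$ with $|C_i|=\mu_i$. Expanding the determinant then writes
\[
a_{P,\lambda}=\sum_{\sigma\in S_\ell}\operatorname{sgn}(\sigma)\,\#\bigl\{(C_1,\dots,C_\ell):\ (C_i)\text{ is an ordered chain-partition of }P,\ |C_i|=\lambda_i-i+\sigma(i)\bigr\},
\]
presenting $a_{P,\lambda}$ as the signed enumeration of a set of pairs (ordered chain-partition, permutation $\sigma$).

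The heart of the argument is a sign-reversing, weight-preserving involution on this signed set whose fixed points are exactly the $P$-tableaux, all carrying sign $+1$ at $\sigma=\mathrm{id}$, where each chain $C_i$, listed increasingly, becomes row $i$. On any non-fixed pair one locates a canonical obstruction --- either a nontrivial descent of $\sigma$ or the first column position where the tableau descent condition fails --- and swaps the tails of the two chains occupying the adjacent rows there, simultaneously correcting $\sigma$ by the corresponding transposition. This pairs each negative term with a positive one and cancels every non-tableau, leaving $a_{P,\lambda}=\#\{P\text{-tableaux of shape }\lambda\}\ge 0$.

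I expect the main obstacle to be showing that this involution is well defined, and it is precisely here that the $(3+1)$-free hypothesis is indispensable. After swapping the tails of two chains one must verify that the two resulting rows are again chains of $P$, that is, that the newly adjacent elements remain comparable; a failure of comparability at the swap site would manifest as an induced $3+1$ in $P$. Thus $(3+1)$-freeness guarantees that the swap never leaves the signed set and that the map is a genuine involution, after which weight-preservation and sign-reversal are routine bookkeeping. This reduces Schur positivity to the nonnegative tableau count $a_{P,\lambda}$ and completes the proof.
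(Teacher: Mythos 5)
Your proposal is correct, and it is essentially the argument this paper relies on: the theorem is stated here by citation, with no independent proof, the introduction pointing to Gasharov's ``smart bijections'' (see also Theorem 6.3 of Shareshian--Wachs). Your reduction of claw-free incomparability graphs to incomparability graphs of $(3+1)$-free posets, the extraction of Schur coefficients via the Hall pairing and Jacobi--Trudi, and the tail-swapping sign-reversing involution whose fixed points are the $P$-tableaux (with $(3+1)$-freeness guaranteeing the swapped rows remain chains) are exactly the ingredients of Gasharov's proof.
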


In view of \cref{thm:Schur:cf-inc},
\citet[Conjecture 1.4]{Sta98} proposed the following concise conjecture, and attributed it to \citeauthor{Gas99}.

\begin{conjecture}[\citeauthor{Sta98}]\label{conj:Schur:cf}
Any claw-free graph is Schur positive.
\end{conjecture}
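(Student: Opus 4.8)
The plan is to obtain this as an upgrade of \citeauthor{Gas99}'s argument behind \cref{thm:Schur:cf-inc}, passing from incomparability graphs to arbitrary claw-free graphs. Recall that a claw-free incomparability graph is exactly $\mathrm{inc}(P)$ for a $(\mathbf{3}+\mathbf{1})$-free poset $P$: an induced claw in $\mathrm{inc}(P)$ is precisely a $3$-element chain together with an element incomparable to each of its members, and Gasharov expands $X_{\mathrm{inc}(P)}$ in the Schur basis by counting $P$-tableaux after applying a sign-reversing involution to an auxiliary signed set of fillings. The first step I would take is to isolate exactly where the poset hypothesis enters: the row- and column-conditions defining $P$-tableaux are phrased through the order relation of $P$, and it is the $(\mathbf{3}+\mathbf{1})$-freeness --- equivalently the claw-freeness of $\mathrm{inc}(P)$ --- that makes the involution well defined. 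The aim is to replace $P$ throughout by data that a general claw-free graph supplies intrinsically.

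It is worth stressing at the outset that one cannot route this through $e$-positivity. As recorded just after \citet{DFv20} in the introduction, there exist claw-free graphs that fail to be $e$-positive, so the conjecture must be attacked directly at the coarser Schur level; in particular the noncommuting-variable $(e)$-positivity machinery developed elsewhere in this paper is unavailable here.

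For the general case I would invoke the structural decomposition of claw-free graphs (the Chudnovsky--Seymour structure theorem): every connected claw-free graph is either one of a short list of basic types --- line graphs of (multi)graphs, circular-interval graphs, and a few sporadic families --- or is assembled from smaller claw-free pieces by gluing along cliques and attaching strips. One would first establish Schur positivity for each basic type, which is already substantial: for line graphs the present paper's $e$-positivity of line graphs of tadpoles furnishes one such family (and $e$-positivity implies Schur positivity), while for circular-interval graphs the underlying cyclic order is a natural candidate to play the role of $P$ in a Gasharov-style tableau model. One would then prove that each gluing operation preserves Schur positivity, ideally via a branching rule in the Hopf algebra $\Lambda$ of symmetric functions expressing the Schur expansion of the assembled graph as a nonnegative combination of those of its constituents.

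The hard part will be this last step. Unlike $e$-positivity, Schur positivity enjoys no convenient product formula under the natural graph operations, and there is no a priori reason that gluing two Schur-positive graphs along a clique yields a Schur-positive chromatic symmetric function; the Littlewood--Richardson-type nonnegativity one would need is exactly what a combinatorial model must encode. I therefore expect the decisive obstacle to be the construction of a single uniform tableau-and-involution model --- a genuine common generalization of Gasharov's $P$-tableaux --- that simultaneously covers the basic claw-free classes and remains stable under the structural gluing operations. Absent such a model, the structure theorem merely reduces the problem rather than resolving it, which is why the statement stands as a conjecture.
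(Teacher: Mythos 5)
The statement you were given is \cref{conj:Schur:cf}, which is precisely that: a \emph{conjecture}. The paper contains no proof of it, and your write-up is not one either---it is a research program whose two decisive steps you yourself flag as unresolved. Concretely: (i) Schur positivity of the ``basic'' classes in the Chudnovsky--Seymour structure theorem is not established (for line graphs of arbitrary graphs nothing like \cref{thm:Schur:cf-inc} is known; the present paper's \cref{coro:LTp} covers only the very thin family $L(\mathrm{Tp}_{m,l})$, and circular-interval graphs have no Gasharov-style tableau model at all); and (ii) there is no argument that gluing claw-free pieces along cliques preserves Schur positivity of the chromatic symmetric function---this is exactly the Littlewood--Richardson-type nonnegativity you would need, and no known branching rule supplies it. Since both pillars are missing, the structure theorem only reformulates the problem, as you correctly concede in your final sentence.

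That said, your assessment of the landscape is accurate and consistent with the paper. The paper's own contribution to \cref{conj:Schur:cf} is exactly the kind of partial result you describe: the line graphs of tadpoles are $(e)$-positive, hence $e$-positive, hence Schur positive, which ``confirms a particular case'' via \citeauthor{CS05}'s characterization of claw-free graphs. Your observation that the conjecture cannot be attacked through $e$-positivity in general is also correct and important: \citet{DFv20} exhibit claw-free graphs that are not $e$-positive, so the noncommuting-variable $(e)$-positivity machinery of \cref{sec:Line:Tpml,sec:Y:CCm3} can at best handle special families, never the full conjecture. In short, there is no gap to locate relative to ``the paper's proof,'' because no proof exists on either side; what you have written is an honest, correctly calibrated statement of why the problem is open, not a solution to it.
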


\citet[Propositions 5.3 and 5.4]{Sta95} demonstrated the $e$-positivity of paths and cycles, which can be considered as the most basic graphs in some sense.
\citet{GS01} introduced certain $(e)$-positivity of chromatic symmetric functions of graphs in noncommuting variables, which is stronger than the $e$-positivity in the ordinary sense.
They established the $(e)$-positivity of paths, cycles, and the so-called K-chains.
These results can be used to show the $(e)$-positivity of tadpole graphs, see \citet{LLWY21}.

In the same spirit, we manage to show the $e$-positivity of the line graphs of tadpoles,
which is a bit less basic.
By computing the generating function of the chromatic symmetric functions of these line graphs, one may see that this $e$-positivity is stronger than that of tadpoles, see \cref{coro:e:tadpole}.
On the other hand,
since $e$-positive graphs are all Schur positive, we obtain the Schur positivity of the line graphs of tadpoles.
Recall that \citet{CS05} discovered a characterization of claw-free graphs, in which line graphs is one of six building blocks.
Therefore, the result above confirms a particular case of \cref{conj:Schur:cf}.

This paper is organized as follows. 
In \cref{sec:preliminary}, we give an overview for necessary notion and notation,
and known results that will be of use in the sequel.
\Cref{sec:Line:Tpml} is devoted to the $(e)$-positivity of 
the line graphs of tadpoles.
In \cref{sec:Y:CCm3}, we derive the bivariate generating function of cycle-chord graphs  $\mathrm{CC}_{a,b}$, which is a slight extension of the line graphs.
We also obtain the $(e)$-positivity of~$\mathrm{CC}_{m,3}$.

\section{Preliminaries} \label{sec:preliminary}

This section consists of basic properties of chromatic symmetric functions  given by \citet{Sta95},
and those for the $(e)$-positivity due to \citet{GS01}.
They all will be of use in the sequel.

\begin{proposition}[\citeauthor{Sta95}]\label{prop:csf:disjoint}
For any graphs $G$ and $H$,
\[
X_{G\sqcup H}=X_G X_H,
\]
where $G\sqcup H$ is the disjoint union of $G$ and~$H$. 
\end{proposition}

One way of computing a chromatic symmetric function is by using the power symmetric functions~$p_\lambda$,
 which are defined for integer partitions $\lambda=\lambda_1\lambda_2\dotsm$ by
\[
p_\lambda=p_{\lambda_1}p_{\lambda_2}\dotsm
\quad\text{and}\quad
p_n=\sum_{i\ge 1}x_i^n.
\]

\begin{proposition}[\citeauthor{Sta95}]\label{prop:csf}
For any graph $G$,
\[
X_G
=\sum_{E'\subseteq E}(-1)^{\abs{E'}}p_{\lambda(E')},
\]
where
$\lambda(E')$ 
is the partition consisting of
the component orders of the spanning subgraph~$(V,E')$.
\end{proposition}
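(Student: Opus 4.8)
The plan is to prove the identity by a sieve (inclusion--exclusion) argument over the edge set, turning the sum over proper colorings into an unrestricted sum that can then be factored component by component. Writing $\mathbf{x}_\kappa=\prod_{v\in V(G)}\mathbf{x}_{\kappa(v)}$, and using the Iverson bracket $[P]\in\{0,1\}$ for the truth value of a statement $P$, I would first note that a coloring $\kappa$ is proper exactly when no edge is monochromatic, so that the properness constraint factors over the edges and
\[
X_G=\sum_{\kappa}\mathbf{x}_\kappa\prod_{\{u,v\}\in E}\bigl(1-[\kappa(u)=\kappa(v)]\bigr),
\]
where the outer sum now ranges over \emph{all} colorings $\kappa\colon V(G)\to\{1,2,\dots\}$, proper or not.

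Next I would expand the product over edges by distributivity: each factor contributes either the term $1$ or the term $-[\kappa(u)=\kappa(v)]$, and selecting the second option precisely on a subset $E'\subseteq E$ yields
\[
\prod_{\{u,v\}\in E}\bigl(1-[\kappa(u)=\kappa(v)]\bigr)=\sum_{E'\subseteq E}(-1)^{\abs{E'}}\prod_{\{u,v\}\in E'}[\kappa(u)=\kappa(v)].
\]
Substituting this into the previous display and interchanging the two summations gives
\[
X_G=\sum_{E'\subseteq E}(-1)^{\abs{E'}}\sum_{\kappa}\mathbf{x}_\kappa\prod_{\{u,v\}\in E'}[\kappa(u)=\kappa(v)],
\]
so that it remains only to evaluate the inner coloring sum for each fixed $E'$.

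The key observation is that the product of indicators is nonzero exactly for those colorings that are constant along every edge of $E'$, equivalently constant on each connected component of the spanning subgraph $(V,E')$. Such a coloring amounts to an independent choice of one color per component, so the inner sum factors as a product over components; a component of order $k$ contributes $\sum_{i\ge1}x_i^{k}=p_k$, whence the whole product equals $p_{\lambda(E')}$. Combining this with the last display gives the claimed formula. I expect no serious obstacle here: the only points demanding care are the interchange of summations, which is legitimate because each monomial of a given degree receives contributions from only finitely many pairs $(\kappa,E')$, and the identification of ``constant on each edge of $E'$'' with ``constant on each component of $(V,E')$''; both are routine.
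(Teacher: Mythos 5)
Your proof is correct, and it is precisely the classical argument: the paper itself gives no proof of this proposition but cites it from Stanley, whose original derivation proceeds by exactly your route --- write properness as a product of edge indicators, expand by inclusion--exclusion over subsets $E'\subseteq E$, and factor the resulting unrestricted coloring sum over the components of $(V,E')$ to obtain $p_{\lambda(E')}$. Your handling of the two delicate points (the interchange of summations, justified coefficientwise, and the identification of edge-constancy with component-constancy) is also sound, so there is nothing to add.
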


The generating function of the power symmetric functions will also be needed (cf.~\cite{Mac95B, MR15B}):
\begin{equation}\label{gf:p}
\sum_{j\ge 0}p_j(-z)^j=\frac{F(z)}{E(z)},
\end{equation}
where
\[
E(z)=\sum_{n\ge0} e_n z^n
\quad\text{and}\quad
F(z)=E(z)-zE'(z).
\]

The generating functions of paths and cycles are known, see \cite[Propositions 5.3 and 5.4]{Sta95}.

\begin{proposition}[\citeauthor{Sta95}]\label{prop:gf:path:cycle}
Denote by $P_n$ the $n$-vertex path and by $C_n$ the $n$-vertex cycle. Then
\[
\sum_{n\geq 0}X_{P_n}z^n
=\frac{E(z)}{F(z)}
\quad\text{and}\quad
\sum_{n\geq 2}X_{C_n}z^n
=\frac{z^2E''(z)}{F(z)}.
\]
As a consequence, paths and cycles are $e$-positive.
\end{proposition}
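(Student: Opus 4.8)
The plan is to read both generating functions directly off \Cref{prop:csf}, converting the sum over edge subsets into a sum over (cyclic) compositions, and then to recognize the resulting series through \eqref{gf:p}. Throughout I would abbreviate
\[
A(z)=\sum_{k\ge1}(-1)^{k-1}p_k z^k,
\]
so that, with the convention $p_0=1$, the identity \eqref{gf:p} reads $1-A(z)=F(z)/E(z)$; equivalently $A=zE'/E$. For paths this is routine: a subset $E'$ of the $n-1$ edges of $P_n$ breaks the vertices into consecutive blocks, so $E'$ is recorded by a composition $\alpha\models n$ with $\abs{E'}=n-\ell(\alpha)$ and $p_{\lambda(E')}=\prod_i p_{\alpha_i}$. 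Hence \Cref{prop:csf} gives $X_{P_n}=\sum_{\alpha\models n}\prod_i\bigl((-1)^{\alpha_i-1}p_{\alpha_i}\bigr)$, and summing over $n$ the composition sum factors as a geometric series,
\[
\sum_{n\ge0}X_{P_n}z^n=\sum_{k\ge0}A(z)^k=\frac{1}{1-A(z)}=\frac{E(z)}{F(z)},
\]
which is the first claim.

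For cycles I would split the subsets $E'\subseteq E(C_n)$ according to whether some edge is missing. If $E'=E$, the single component is the whole cycle, contributing $(-1)^np_n$, and summing these over $n$ produces $-A(z)$. Otherwise the missing edges cut $C_n$ into arcs, and an arc of size $\beta$ again contributes the factor $(-1)^{\beta-1}p_\beta$. The delicate point, and the step I expect to be the main obstacle, is counting these arc tilings correctly on the \emph{labeled} cycle: naively pointing at a missing edge overcounts each tiling by its number of arcs and inserts a spurious factor $1/(1-A)$. Instead I would point at vertex $1$: the arc containing it, of size $\beta_0$, admits exactly $\beta_0$ cyclic placements, while the remaining $n-\beta_0$ vertices form a \emph{linear} path tiled exactly as in the path case. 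This decomposition yields the cut-term generating function $\bigl(\sum_{\beta_0\ge1}\beta_0(-1)^{\beta_0-1}p_{\beta_0}z^{\beta_0}\bigr)\cdot(1-A)^{-1}=zA'/(1-A)$.

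Assembling the two parts gives $\sum_{n}X_{C_n}z^n=-A+zA'/(1-A)=(A^2-A+zA')/(1-A)$. It then remains to match this against the target. A short computation starting from $E'/E=\sum_i x_i/(1+x_iz)$ (so that $E''/E=(E'/E)^2+(E'/E)'$) shows $z^2E''/E=A^2+zA'-A$, and since $F/E=1-A$ this equals $z^2E''/F$, proving the second claim. I would finish this part with a routine check of the low-order terms, which simultaneously pins down the intended convention $X_{C_2}=p_1^2-p_2$ (the doubled edge) at which the series begins.

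Finally, the $e$-positivity is immediate once the series are in hand. Since $F(z)=E(z)-zE'(z)=1-\sum_{n\ge2}(n-1)e_nz^n$, its reciprocal
\[
\frac{1}{F(z)}=\sum_{m\ge0}\Bigl(\sum_{n\ge2}(n-1)e_nz^n\Bigr)^{m}
\]
has $e$-positive coefficients. Multiplying this $e$-positive series by the manifestly $e$-positive series $E(z)$ and $z^2E''(z)=\sum_{n\ge2}n(n-1)e_nz^n$ respectively shows that every $X_{P_n}$ and every $X_{C_n}$ is $e$-positive, completing the proof.
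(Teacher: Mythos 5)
Your proof is correct, but there is no internal proof to compare it with: the paper states this proposition as a known result, citing \citet[Propositions 5.3 and 5.4]{Sta95}, and uses it as a black box. So what your argument buys is a self-contained derivation of the cited result from \cref{prop:csf} and \eqref{gf:p}, along the classical lines. The path case (edge subsets of $P_n$ $\leftrightarrow$ compositions of $n$, giving the geometric series $1/(1-A)$) is routine, as you say; the cycle case is the delicate one, and your rooting-at-vertex-$1$ count is right: an edge subset of $C_n$ with at least one missing edge is recovered uniquely from the size $\beta_0$ of the arc containing vertex $1$, the position of vertex $1$ inside that arc ($\beta_0$ choices), and a composition of $n-\beta_0$ tiling the complementary path, which yields $zA'/(1-A)$ and avoids exactly the overcounting you flag. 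The identity $A^2-A+zA'=z^2E''/E$ for $A=zE'/E$ checks out by direct differentiation (indeed $E''/E=(E'/E)^2+(E'/E)'$ needs no product formula, only the quotient rule), and the concluding positivity argument, expanding $1/F=\sum_{m\ge0}\bigl(\sum_{n\ge2}(n-1)e_nz^n\bigr)^m$ and multiplying by the $e$-positive series $E(z)$ and $z^2E''(z)$, is exactly the standard deduction.

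One bookkeeping slip is worth fixing. Summed over $n\ge2$, your all-edges term equals $-A(z)+p_1z$, not $-A(z)$, and your cut-term series $zA'/(1-A)$ likewise contains an extra $p_1z$ coming from $\beta_0=1$ with empty composition (the case $n=1$); these two spurious terms cancel, which is why your assembled formula $(A^2-A+zA')/(1-A)$ is nevertheless exactly $\sum_{n\ge2}X_{C_n}z^n$. The cleanest repair is to let both case sums run over $n\ge1$, noting that $X_{C_1}=0$ since a loop admits no proper coloring; then the two sums equal $-A$ and $zA'/(1-A)$ on the nose, and the $n=1$ coefficients visibly cancel.
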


A beautiful \emph{triple-deletion} property 
can be used to reduce the computation of a chromatic symmetric function,
see \citet[Theorem 3.1, Corollaries 3.2 and 3.3]{OS14}.

\begin{theorem}[\citeauthor{OS14}]\label{thm:rec:3del}
Let $G$ be a graph. Suppose that $G$ contains three verices $u$, $v$, and $w$
such that no two of them are connected by an edge.
Write $e_1=uv$, $e_2=vw$, and $e_3=wu$.
For any set $S\subseteq \{1,2,3\}$, 
denote by $G_S$ the subgraph spanned by the edge set $E(G)\cup\{e_j\colon j\in S\}$.
Then 
\[
X_{G_{12}}=X_{G_1}+X_{G_{23}}-X_{G_3}
\quad\text{and}\quad
X_{G_{123}}=X_{G_{12}}+X_{G_{23}}-X_{G_2}.
\]
\end{theorem}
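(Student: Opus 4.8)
The plan is to argue straight from the definition of the chromatic symmetric function as a weighted sum over proper colorings, rather than through the power-sum expansion of \cref{prop:csf}. The point is that all the graphs appearing, namely $G_1$, $G_3$, $G_{12}$, $G_{23}$ (and later $G_2$, $G_{123}$), share the same vertex set and the same ``base'' edge set $E(G)$, differing only in which of the optional edges $e_1,e_2,e_3$ they contain. First I would fix a coloring $\kappa$ of $V(G)$ and weight it by its monomial $\prod_{v\in V(G)}\mathbf{x}_{\kappa(v)}$. Since $E(G)\subseteq E(G_S)$ for every $S$, any $\kappa$ improper on $E(G)$ is improper for all the $G_S$ and contributes nothing anywhere; so I may restrict attention to colorings $\kappa$ proper on $E(G)$. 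For such a $\kappa$, writing $M(\kappa)\subseteq\{1,2,3\}$ for the set of indices $j$ with $e_j$ \emph{monochromatic} (its two endpoints sharing a color), one has that $\kappa$ is a proper coloring of $G_S$ exactly when $S\cap M(\kappa)=\emptyset$. Hence each $X_{G_S}$ is the weighted sum over all $\kappa$ proper on $E(G)$ of the indicator of the event $S\cap M(\kappa)=\emptyset$.

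With this bookkeeping, both claimed identities become, term by term in $\kappa$, purely Boolean identities in the single datum $M=M(\kappa)$. Concretely, the first identity reduces to
\[
\mathbf{1}[M\cap\{1,2\}=\emptyset]
=\mathbf{1}[M\cap\{1\}=\emptyset]
+\mathbf{1}[M\cap\{2,3\}=\emptyset]
-\mathbf{1}[M\cap\{3\}=\emptyset],
\]
while the second reduces to the analogous relation with $\{1,2,3\}$, $\{1,2\}$, $\{2,3\}$, $\{2\}$ filling the four roles. So it suffices to verify these two relations for every possible value of $M$, after which summing the pointwise identities against the weights $\prod_{v}\mathbf{x}_{\kappa(v)}$ over all $\kappa$ proper on $E(G)$ delivers the two stated equalities.

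The crux — and the one place the hypothesis that no two of $u,v,w$ are adjacent in $G$ actually enters — is that the three optional edges $e_1=uv$, $e_2=vw$, $e_3=wu$ form a triangle on $\{u,v,w\}$. I would observe that this forces $\abs{M}\in\{0,1,3\}$: if two of the pairs were monochromatic, say $\kappa(u)=\kappa(v)$ and $\kappa(v)=\kappa(w)$, then transitivity gives $\kappa(u)=\kappa(w)$, making the third pair monochromatic too. Thus $M$ ranges only over $\emptyset$, the three singletons, and $\{1,2,3\}$ — five cases in all — and against each the two Boolean relations check at once (the three-term signed combinations being rigged precisely so that the ``two coincidences'' pattern, which is the only one that could break them, is exactly the one excluded). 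Because $u,v,w$ are pairwise non-adjacent in $G$, no proper coloring of $G$ is ruled out a priori and all five patterns genuinely occur, though since the Boolean relations hold for every admissible $M$ the argument does not even require this.

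The main obstacle is thus conceptual rather than computational: seeing that the signed three-term combinations are engineered to collapse under the constraint $\abs{M}\in\{0,1,3\}$ coming from the triangle, so that the troublesome two-coincidence case never contributes. Once that is recognized the case analysis is immediate, and the second identity is obtained by the identical reduction and check, with $M=\{1,2,3\}$ and the singleton $\{2\}$ now playing the decisive roles.
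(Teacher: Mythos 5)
Your proof is correct, and a point of context first: the paper itself offers no proof of \cref{thm:rec:3del} — the theorem is quoted from \citet[Theorem 3.1, Corollaries 3.2 and 3.3]{OS14} — so your argument fills in a citation rather than paralleling an in-paper proof. On its merits: restricting to colorings proper on $E(G)$ is valid because every $G_S$ has vertex set $V(G)$ and edge set containing $E(G)$, so colorings improper on $E(G)$ contribute $0$ to every term; the identification $X_{G_S}=\sum_{\kappa}\mathbf{1}[S\cap M(\kappa)=\emptyset]\prod_{v}x_{\kappa(v)}$ (sum over $\kappa$ proper on $E(G)$) is then exact, and both identities do reduce to your pointwise Boolean claims. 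Your crucial structural fact — color equality is transitive, so $M(\kappa)$ never has exactly two elements — is precisely what rescues the identities: for $M=\{1,3\}$, say, the first identity would read $0=0+1-0$ and fail, so the exclusion is genuinely needed, and the five remaining cases check as you say. This is in substance the same mechanism as Orellana and Scott's original argument, which organizes the observation as an equality of coloring sets rather than a case check: a coloring proper on $G_1$ with $\kappa(v)=\kappa(w)$ is exactly a coloring proper on $G_3$ with $\kappa(v)=\kappa(w)$ (transitivity again), whence $X_{G_1}-X_{G_{12}}=X_{G_3}-X_{G_{23}}$, and similarly $X_{G_{12}}-X_{G_{123}}=X_{G_2}-X_{G_{23}}$. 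One minor correction of emphasis: the pairwise non-adjacency of $u,v,w$ in $G$ is not what makes the triangle/transitivity argument work (that is automatic); it is what makes the graphs $G_S$ well-defined simple-graph extensions of $G$, i.e.\ what makes the statement itself meaningful — though, as you note, your pointwise identities hold for every admissible $M$, so nothing in the proof is affected.
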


\citet{GS01} made a systematic study of the algebra of chromatic symmetric functions in noncommuting variables.
Let $G$ be a graph with vertices labeled by $v_{1}, \dots, v_{d}$.
They defined the \emph{chromatic symmetric function in noncommuting variables} of $G$ to be
\[
Y_{G}=\sum_{\kappa} x_{\kappa(v_{1})}\dotsm x_{\kappa(v_{d})},
\]
where the sum runs also over all proper colorings $\kappa$ of $G$.
Note that $Y_{G}$ depends not only on the coloring~$\kappa$, but also on the vertex labeling of $G$.
Let $\Pi_d$ denote the lattice of partitions of the set 
\[
[d]=\{1,\dots, d\}
\]
ordered by refinement. 
Given $\pi \in \Pi_d$, 
the \emph{elementary symmetric function $e_{\pi}$ in noncommuting variables} is defined by
\[
e_{\pi}=\sum_{(i_1, \dots, i_d)} x_{i_1} \dotsm x_{i_d},
\]
where the sum runs over all sequences $(i_1,\dots, i_d)$ of positive integers such that $i_j \neq i_k$ if $j$ and $k$ are in the same block of $\pi$.
The \emph{type} $\lambda(\pi)$ of $\pi$ is 
the integer partition of $d$ whose parts are the block sizes of $\pi$.
It is direct to verify that $e_{\pi}$ becomes $\lambda(\pi)! e_{\lambda(\pi)}$
if we allow the variables $x_i$ to commute, where the symbol $\lambda!$ stands for the product of factorials of all parts of a partition $\lambda$.

Fix an element $i\in[d]$.
\citeauthor{GS01} introduced the following congruence relations:
\begin{itemize}
\item
Two partitions $\pi, \sigma\in \Pi_d$ are
said to be \emph{congruent modulo $i$},
denoted $\pi \equiv_{i} \sigma$, if
\[
\lambda(\pi)=\lambda(\sigma) \quad\text{and}\quad
b_{\pi, i}=b_{\sigma, i},
\]
where $b_{\pi, i}$ is the size of the block of $\pi$ that contains $i$. 
Denote by $(\pi)_i$ the congruence class of $\Pi_d$ modulo $i$ that contains $\pi$.
\item
For any elementary symmetric functions $e_\pi$ and $e_\sigma$,
we write $e_\pi \equiv_i e_\sigma$ if and only if $\pi \equiv_{i} \sigma$.
Denote by $e_{(\pi)_i}$ the congruence class modulo $i$ that contains $e_{\pi}$.
\item
For any graph $G$ on $d$ vertices, we write
\begin{equation}\label{equ:Y_G}
Y_G\equiv_{i} \sum_{(\pi)_i\subseteq \Pi_d} c_{(\pi)_i} e_{(\pi)},
\end{equation}
where
\[
c_{(\pi)_i}=\sum_{\sigma \in(\pi)_i} c_{\sigma}
\quad\text{if}\quad
Y_{G}=\sum_{\sigma \in \Pi_{d}} c_{\sigma} e_{\pi}.
\]
\end{itemize}
They say that $G$ is \emph{$(e)$-positive modulo $i$} if
all coefficients $c_{(\pi)}$ are nonnegative.
When $i$ equals the order $d$ of the graph $G$,
the term ``modulo $i$''
and the letter $i$ in the notation 
\[
b_{\pi, i},\qquad \equiv_i, \qquad\text{and}\qquad (\pi)_i
\]
are all ignored.
For instance, $G$ is said to be \emph{$(e)$-positive} if it is so modulo $d$, and 
\cref{equ:Y_G} reduces to
\begin{equation}\label{eq:equiv:Y}
Y_G\equiv\sum_{(\pi)\subseteq\Pi_d}c_{(\pi)}e_{\pi}.
\end{equation}
Unlike the $e$-coefficients in $X_G$, it is possible that the $e$-coefficients $c_{\pi}$ in~$Y_G$ is not integral.

\cref{prop:(e)toe} builds a bridge between the $(e)$-positivity and the $e$-positivity of the same graph, which is proved and used in~\cite{GS01}.
We state it as a proposition for clarity.
 
\begin{proposition}[\citeauthor{GS01}]\label{prop:(e)toe}
Any $(e)$-positive graph is $e$-positive.
\end{proposition}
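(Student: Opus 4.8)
The plan is to exploit the natural projection $\rho$ that lets the noncommuting variables commute, regarded as a $\mathbb{Q}$-linear map from the algebra of symmetric functions in noncommuting variables onto $\Lambda(x_1,x_2,\ldots)$. First I would record the two values of $\rho$ that the excerpt already supplies. Applying $\rho$ to $Y_G$ returns $X_G$, since each monomial $x_{\kappa(v_1)}\dotsm x_{\kappa(v_d)}$ collapses to $\prod_{v\in V(G)}x_{\kappa(v)}$ while the sum over proper colorings $\kappa$ is unaffected. Applying $\rho$ to a noncommuting elementary symmetric function yields $\rho(e_\pi)=\lambda(\pi)!\,e_{\lambda(\pi)}$, by the verification noted right after the definition of $e_\pi$. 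The structural point I want to emphasize is that $\rho(e_\pi)$ depends on $\pi$ only through its type $\lambda(\pi)$; in particular $\rho$ is constant up to the positive scalar $\lambda!$ on each congruence class, because congruent partitions share the same type.

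Next I would expand $Y_G=\sum_{\sigma\in\Pi_d}c_\sigma e_\sigma$ and apply the linear map $\rho$ termwise. Grouping the partitions $\sigma$ by congruence class and using $c_{(\pi)}=\sum_{\sigma\in(\pi)}c_\sigma$, I obtain
\[
X_G=\rho(Y_G)=\sum_{\sigma\in\Pi_d}c_\sigma\,\lambda(\sigma)!\,e_{\lambda(\sigma)}=\sum_{(\pi)}c_{(\pi)}\,\lambda(\pi)!\,e_{\lambda(\pi)}.
\]
Collecting the terms of a common type $\lambda$ then shows that the coefficient of $e_\lambda$ in $X_G$ equals $\lambda!\sum_{(\pi)\colon\lambda(\pi)=\lambda}c_{(\pi)}$. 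Finally I would invoke the hypothesis: if $G$ is $(e)$-positive then every $c_{(\pi)}\ge 0$, and since $\lambda!>0$ each such coefficient is nonnegative, whence $X_G$ is $e$-positive.

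There is no deep obstacle here; the entire content is careful bookkeeping, and the argument is robust to whether the congruence modulo $d$ is taken by type alone or also records the size $b_{\pi,d}$ of the block containing $d$, since in either case I further group the classes by type. The one point I would state precisely is that $\rho$ is well defined and $\mathbb{Q}$-linear, so that it may be applied term by term to the possibly non-integral $e$-expansion of $Y_G$, and that it is compatible with grouping by congruence class because congruent partitions have equal type. The essential feature making the argument work is that the scalar $\lambda!$ is strictly positive and therefore cannot spoil positivity, which is exactly what allows $(e)$-positivity to be upgraded to $e$-positivity.
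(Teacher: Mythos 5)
Your proof is correct and is essentially the paper's own argument: the paper likewise applies the commutative specialization, under which $e_\pi$ becomes $\lambda(\pi)!\,e_{\lambda(\pi)}$, to the congruence-class expansion of $Y_G$ and reads off $X_G=\sum_{(\pi)\subseteq\Pi_d}c_{(\pi)}\lambda(\pi)!\,e_{\lambda(\pi)}\ge 0$ coefficientwise. You merely make explicit the bookkeeping (the projection $\rho$ being linear and constant in type on each congruence class) that the paper leaves implicit.
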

\begin{proof}
Suppose that \cref{eq:equiv:Y} holds and $c_{(\pi)}\ge 0$ for all congruence classes $(\pi)$.
Then
\[
X_G=\sum_{(\pi)\subseteq \Pi_d}c_{(\pi)}\lambda(\pi)!\, e_{\lambda(\pi)}
\]
is $e$-positive. 
\end{proof}

\citeauthor{GS01} defined the induction operation $\ind$ on monomials $x_{i_{1}} \dotsm x_{i_{d-1}}$, by
\[
\left(x_{i_{1}} \dotsm x_{i_{d-1}}\right) 
\ind=x_{i_{1}} \dotsm x_{i_{d-2}} x_{i_{d-1}}^{2}
\]
and extended it linearly.
They discovered the following deletion-contraction property to reduce the computation of the chromatic symmetric functions $Y_G$, see \cite[Proposition 3.5]{GS01}.
\begin{proposition}[{\citeauthor{GS01}}]\label{prop:DC}
For the edge $e=v_{d-1}v_d$ in a graph $G$ whose vertices are labeled by $v_1,\dots,v_d$,  
\[
Y_G=Y_{G \backslash e}-Y_{G / e}\ind,
\]
where the contraction of $e$ is labeled by $v_{d-1}$.
\end{proposition}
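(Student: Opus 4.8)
The plan is to prove the identity directly from the definition of $Y_G$ by partitioning the proper colorings of $G\backslash e$ according to the colors assigned to the two endpoints of the edge $e=v_{d-1}v_d$. First I would write
\[
Y_{G\backslash e}=\sum_{\kappa}x_{\kappa(v_1)}\dotsm x_{\kappa(v_d)},
\]
where $\kappa$ ranges over all proper colorings of $G\backslash e$, and then split this sum into the colorings with $\kappa(v_{d-1})\ne\kappa(v_d)$ and those with $\kappa(v_{d-1})=\kappa(v_d)$. The whole argument is just the classical deletion--contraction dichotomy carried out at the level of noncommuting monomials.

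For the first group, I would observe that a coloring of $G\backslash e$ with $\kappa(v_{d-1})\ne\kappa(v_d)$ is proper for $G\backslash e$ \emph{if and only if} it is proper for $G$, since the only extra constraint imposed by the edge $e$ is precisely that $v_{d-1}$ and $v_d$ receive distinct colors. Because the associated monomials $x_{\kappa(v_1)}\dotsm x_{\kappa(v_d)}$ are literally unchanged, this group contributes exactly $Y_G$.

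For the second group, the main step is to set up a bijection with the proper colorings of $G/e$ that respects colors. Given a proper coloring $\kappa$ of $G\backslash e$ with $\kappa(v_{d-1})=\kappa(v_d)$, I would define a coloring $\kappa'$ of $G/e$ by assigning each vertex $v_j$ with $j\le d-1$ the color $\kappa(v_j)$, where the contracted vertex carries the label $v_{d-1}$ and the common color $\kappa(v_{d-1})=\kappa(v_d)$. Properness transfers because every neighbor of the contracted vertex is a neighbor of $v_{d-1}$ or of $v_d$ in $G\backslash e$, and $\kappa$ already separates those colors from the shared one; the map is visibly invertible. The point requiring care is to match the monomials: the contribution of such a $\kappa$ is $x_{\kappa(v_1)}\dotsm x_{\kappa(v_{d-2})}x_{\kappa(v_{d-1})}^2$, which is exactly what the operator $\ind$ produces from the monomial $x_{\kappa'(v_1)}\dotsm x_{\kappa'(v_{d-1})}$ of $Y_{G/e}$, since $\ind$ squares the last variable and the contracted vertex is labeled $v_{d-1}$. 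Hence this group contributes $Y_{G/e}\ind$.

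Combining the two groups yields $Y_{G\backslash e}=Y_G+Y_{G/e}\ind$, and rearranging gives the stated identity. I expect the only genuine obstacle to be the monomial bookkeeping in the second group: one must invoke the labeling convention that the contracted vertex keeps the label $v_{d-1}$, and therefore occupies the last coordinate, so that duplicating $x_{\kappa(v_d)}$ coincides with the action of $\ind$ on the final factor. Any other placement of the contracted vertex would break this correspondence, which is why the convention is built into the statement.
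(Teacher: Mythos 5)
Your proof is correct: the split of proper colorings of $G\backslash e$ according to whether $\kappa(v_{d-1})=\kappa(v_d)$, the identification of the first group with $Y_G$, and the color-preserving bijection between the second group and proper colorings of $G/e$ (with the monomial bookkeeping handled by the convention that the contracted vertex keeps the label $v_{d-1}$, so that $\ind$ squares the correct final factor) together give $Y_{G\backslash e}=Y_G+Y_{G/e}\ind$, as required. The paper itself offers no proof of this proposition --- it is quoted from Gebhard and Sagan --- and your argument is essentially the standard deletion--contraction proof given in that original source, so there is nothing to compare beyond noting the agreement.
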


In practice, we always firstly
select an edge $e$ in \cref{prop:DC} 
according to some reduction strategy,
and then we need to relabel the ends of $e$ so that their labels become the largest and second largest labels.
The effect of such relabelings is demonstrated by
\cref{lem:relabel}, see \cite[Lemma 6.6]{GS01}.

\begin{lemma}[{\citeauthor{GS01}}]\label{lem:relabel}
For any relabeling $\gamma(G)$ of vertices of $G$
that fixes the element $d$, we have
$Y_{\gamma(G)} \equiv Y_G$.
\end{lemma}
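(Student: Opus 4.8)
The plan is to realize the relabeling as a permutation of the positions of the variables in each monomial of $Y_G$, and then to track how this position-permutation acts on the noncommuting elementary symmetric functions $e_\pi$. Writing $a_i=\kappa(v_i)$ for a proper coloring $\kappa$, the monomial contributed by $\kappa$ to $Y_G$ is $x_{a_1}\dotsm x_{a_d}$. In $\gamma(G)$ the vertex carrying the label $j$ is the one that carried the label $\gamma^{-1}(j)$ in $G$, so the same coloring $\kappa$ now contributes $x_{a_{\gamma^{-1}(1)}}\dotsm x_{a_{\gamma^{-1}(d)}}$. Since the proper colorings of $G$ and of $\gamma(G)$ coincide—the underlying graph is unchanged—$Y_{\gamma(G)}$ is obtained from $Y_G$ by applying, to every monomial, the linear operator that permutes the $d$ positions according to $\tau=\gamma^{-1}$. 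Because $\gamma$ fixes $d$, so does $\tau$.

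First I would compute the effect of this position-permutation on a single basis element $e_\pi$. Re-indexing the defining sum of $e_\pi$ shows that permuting the positions by $\tau$ sends $e_\pi$ to $e_{\tau(\pi)}$, where $\tau(\pi)\in\Pi_d$ denotes the set partition obtained from $\pi$ by applying $\tau$ to its ground set $[d]$; the point is simply that the constraint ``$i_j\neq i_k$ whenever $j,k$ lie in a common block of $\pi$'' is carried to the corresponding constraint for $\tau(\pi)$. Consequently, if $Y_G=\sum_{\sigma\in\Pi_d}c_\sigma e_\sigma$, then $Y_{\gamma(G)}=\sum_{\sigma\in\Pi_d}c_\sigma e_{\tau(\sigma)}$.

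The heart of the argument is the observation that a permutation fixing $d$ respects the congruence modulo $d$. Indeed, applying $\tau$ to the ground set $[d]$ does not change the list of block sizes, so $\lambda(\tau(\sigma))=\lambda(\sigma)$; and since $\tau(d)=d$, the block of $\tau(\sigma)$ containing $d$ is the $\tau$-image of the block of $\sigma$ containing $d$, whence $b_{\tau(\sigma),d}=b_{\sigma,d}$. Thus $\tau(\sigma)\equiv_d\sigma$ for every $\sigma$, that is, $\tau$ permutes the elements within each congruence class $(\pi)$ and fixes that class setwise. Summing the coefficients $c_\sigma$ over each class therefore yields the same value for $Y_{\gamma(G)}$ as for $Y_G$, which is exactly the assertion $Y_{\gamma(G)}\equiv Y_G$ in the sense of \cref{eq:equiv:Y}.

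The only delicate point is the bookkeeping in the middle step—verifying that permuting positions really does send $e_\pi$ into the $e$-basis, namely to $e_{\tau(\pi)}$—but once the defining sums are written out this is a routine re-indexing. I expect no genuine obstacle beyond keeping the direction of the permutation ($\gamma$ versus $\gamma^{-1}$) straight; note that the conclusion is insensitive to this choice, since both $\tau$ and $\tau^{-1}$ fix $d$ and hence preserve every congruence class modulo $d$.
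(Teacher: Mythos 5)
Your proposal cannot be compared line-by-line with anything in the paper, because the paper does not prove \cref{lem:relabel} at all: it is quoted from \citet{GS01} (their Lemma~6.6) and used as a black box. Judged on its own, your argument is correct and self-contained. The three steps are sound: (i) $G$ and $\gamma(G)$ have the same proper colorings, so $Y_{\gamma(G)}$ is the image of $Y_G$ under the linear operator that permutes the $d$ tensor positions; (ii) this operator maps each $e_\pi$ to $e_{\pi'}$, where $\pi'$ is the image of $\pi$ under a permutation of $[d]$ fixing $d$ (your re-indexing actually produces $e_{\gamma(\pi)}$ rather than $e_{\gamma^{-1}(\pi)}$ under the stated convention, but you flag this and correctly observe that the discrepancy is harmless, since both permutations fix $d$); (iii) any permutation fixing $d$ preserves $\lambda(\pi)$ and $b_{\pi,d}$, hence maps each congruence class modulo $d$ bijectively onto itself, so the class sums $c_{(\pi)}$ of $Y_{\gamma(G)}$ and $Y_G$ agree, which is precisely the meaning of $\equiv$ in \cref{eq:equiv:Y}. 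One ingredient you use silently and could state explicitly: the coefficients $c_\sigma$ are well defined because $\{e_\pi\colon \pi\in\Pi_d\}$ is a basis of the degree-$d$ component of the algebra of symmetric functions in noncommuting variables (this is established in \citet{GS01}); without uniqueness of the expansion, ``summing the coefficients over each class'' would not be well posed. In substance your route --- pushing the relabeling action through the $e_\pi$ basis and checking it respects congruence --- is the same mechanism that underlies the original proof of \citeauthor{GS01}, who exploit that a vertex relabeling fixing $d$ preserves both the type and the block containing $d$; so nothing is lost, and your version has the merit of working directly from the definitions of $Y_G$ and $e_\pi$ given in \cref{sec:preliminary}.
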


For any block $B$ that is disjoint with the set $[d]$,
they use the symbol $\pi/B$ to denote the partition that is formed by adding the block $B$ to $\pi$, 
and the symbol $\pi+(d+1)$ to denote the partition of~$[d+1]$ that is formed by inserting the element $d+1$ into the block of $\pi$ that contains $d$. 
\Cref{prop:ind:e} exhibits the effect of the induction operation acted on $e_\pi$, see \cite[Corollary 6.1]{GS01}.

\begin{proposition}[{\citeauthor{GS01}}]\label{prop:ind:e}
For any partition $\pi\in \Pi_d$, 
\[
e_{\pi}\ind \,\equiv \frac{1}{b_{\pi}}
\brk2{e_{(\pi/d+1)}-e_{(\pi+(d+1))}}.
\]
\end{proposition}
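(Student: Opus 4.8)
The plan is to unfold both sides into sums of monomials indexed by sequences of colour indices and then match them using a symmetry inside the block of $\pi$ through $d$. Write $b=b_\pi$ and let $B$ be that block, so $d\in B$ and $\abs{B}=b$. I would first expand the right-hand side: $e_{\pi/(d+1)}$ is the sum of all monomials $x_{j_1}\dotsm x_{j_{d+1}}$ whose first $d$ indices satisfy the inequalities imposed by $\pi$, with $j_{d+1}$ free, whereas $e_{\pi+(d+1)}$ is the same sum subject to the additional constraint $j_{d+1}\neq j_l$ for every $l\in B$. Hence $e_{\pi/(d+1)}-e_{\pi+(d+1)}$ retains exactly the sequences for which $j_{d+1}=j_l$ for some $l\in B$; since the indices $\{j_l\colon l\in B\}$ are pairwise distinct, this $l$ is unique, and the difference splits as $\sum_{l\in B}S_l$, where $S_l$ collects the monomials with $j_{d+1}=j_l$.

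Next I would identify the distinguished term $S_d$. Forcing $j_{d+1}=j_d$ simply doubles the last variable of every length-$d$ monomial of $e_\pi$, which is precisely the definition of $\ind$; thus $S_d=e_\pi\ind$. It then suffices to prove $S_l\equiv_{d+1}S_d$ for each $l\in B$, because this yields $e_{\pi/(d+1)}-e_{\pi+(d+1)}\equiv b\,e_\pi\ind$, which is the asserted identity after dividing by $b$ and passing to congruence classes.

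For the congruence I would invoke the position-relabeling $\gamma=(l\ d)$ that transposes $l$ and $d$ and fixes $d+1$. Since $l$ and $d$ belong to the same block $B$ of $\pi$, this $\gamma$ fixes $\pi$, and a direct substitution shows that it transforms the defining constraint $j_{d+1}=j_d$ of $S_d$ into the constraint $j_{d+1}=j_l$ of $S_l$, so that $\gamma(S_d)=S_l$. Because $\gamma$ fixes $d+1$, it acts on the basis by $e_\sigma\mapsto e_{\gamma\sigma}$ with $\gamma\sigma\equiv_{d+1}\sigma$ (same type, and same size of the block containing $d+1$), hence it preserves the congruence class modulo $d+1$; this is the content of \cref{lem:relabel}. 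Therefore $S_l\equiv_{d+1}S_d$ for all $l\in B$, completing the argument.

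The step I expect to demand the most care is the splitting of the difference: one must verify that the constraints of $\pi/(d+1)$ and $\pi+(d+1)$ differ in exactly the single condition tying $j_{d+1}$ to $B$, and that the distinctness of the indices within $B$ makes the decomposition $\sum_{l\in B}S_l$ genuinely disjoint. The relabeling step is conceptually transparent but formally requires reading \cref{lem:relabel} for an arbitrary element of the algebra rather than a chromatic function $Y_G$; since its only ingredient is that a relabeling fixing $d+1$ acts on the $e$-basis within a single congruence class, this extension is immediate.
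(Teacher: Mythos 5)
Your proof is correct. Note, however, that the paper does not prove this proposition at all: it is quoted verbatim from \citet{GS01} (their Corollary 6.1), so there is no internal proof to compare against; what you have written is a self-contained reconstruction of the Gebhard--Sagan argument. The two key steps both check out: the difference $e_{\pi/(d+1)}-e_{\pi+(d+1)}$ consists precisely of the monomials whose last index collides with $x_{i_l}$ for a (necessarily unique, by distinctness within the block $B$) position $l\in B$, giving the disjoint splitting $\sum_{l\in B}S_l$ with $S_d=e_\pi\ind$; and the place-transposition $(l\ d)$ fixes $\pi$ and fixes $d+1$, hence carries $S_d$ to $S_l$ while preserving congruence classes modulo $d+1$. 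You were also right to flag that \cref{lem:relabel} as stated in the paper applies only to functions $Y_G$, and your direct justification from the definition of $\equiv_{d+1}$ (same type, same size of the block containing $d+1$) is exactly what is needed; the only point you leave tacit is that $S_d=e_\pi\ind$ and the $S_l$ lie in the span of $\{e_\sigma\colon\sigma\in\Pi_{d+1}\}$ so that the congruence is meaningful, which is standard since these are symmetric functions in noncommuting variables and the $e_\sigma$ form a basis of that space.
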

Since the induction respects the congruence relation of partitions, namely,
\[
e_\pi\equiv e_\sigma
\implies
e_\pi\ind \equiv e_\sigma\ind,
\]
it is extendable to congruence classes. Precisely speaking, they defined
\[
e_{(\pi)}\ind\equiv
\sum_{(\sigma)\subseteq \Pi_{d+1}}c_{(\sigma)}e_{(\sigma)}
\quad\text{if}\quad
e_{\pi}\ind=\sum_{\sigma\in \Pi_{d+1}}c_{\sigma}e_{\sigma}.
\]
By using the induction operation, \citet[Propsitions 6.4 and 6.7]{GS01} obtained \cref{prop:path:cycle},
which implies the $(e)$-positivity of paths $P_d$ and cycles $C_d$.

\begin{proposition}[{\citeauthor{GS01}}]\label{prop:path:cycle}
If \cref{eq:equiv:Y} holds for $G=P_d$, 
then 
\begin{align*}
Y_{P_{d+1}}
\equiv
\sum_{(\pi)\subseteq \Pi_d}
\frac{c_{(\pi)}}{b_{\pi}}
\brk2{(b_{\pi}-1)e_{(\pi/d+1)}+
e_{(\pi+(d+1))}}\quad\text{and}\quad
Y_{C_{d+1}}
\equiv \sum_{(\pi)\subseteq \Pi_d}c_{(\pi)}e_{(\pi+(d+1))}.
\end{align*}
\end{proposition}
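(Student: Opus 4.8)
The plan is to handle both graphs by peeling off the last vertex $v_{d+1}$ with the deletion--contraction recursion of \cref{prop:DC}, and then to rewrite the resulting pieces in the $e$-basis by means of \cref{prop:ind:e}.

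For the path I would apply \cref{prop:DC} to the edge $e=v_dv_{d+1}$ of $P_{d+1}$. Deleting $e$ leaves the path $P_d$ on $v_1,\dots,v_d$ together with an isolated vertex $v_{d+1}$, whereas contracting $e$ returns $P_d$. The isolated-vertex piece is easy: the last letter carries no constraint, so $Y_{P_d\sqcup K_1}=Y_{P_d}\cdot\sum_{j}x_j$, and adjoining such an unconstrained letter to $e_\sigma$ merely appends the singleton block $\{d+1\}$, turning $e_\sigma$ into $e_{\sigma/(d+1)}$; hence $Y_{P_d\sqcup K_1}\equiv\sum_{(\pi)\subseteq\Pi_d}c_{(\pi)}e_{(\pi/d+1)}$. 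For the contraction piece I would feed the assumed expansion of $Y_{P_d}$ into \cref{prop:ind:e} to get $Y_{P_d}\ind\equiv\sum_{(\pi)\subseteq\Pi_d}\frac{c_{(\pi)}}{b_\pi}\bigl(e_{(\pi/d+1)}-e_{(\pi+(d+1))}\bigr)$. Subtracting,
\[
Y_{P_{d+1}}\equiv\sum_{(\pi)\subseteq\Pi_d}c_{(\pi)}e_{(\pi/d+1)}-\sum_{(\pi)\subseteq\Pi_d}\frac{c_{(\pi)}}{b_\pi}\bigl(e_{(\pi/d+1)}-e_{(\pi+(d+1))}\bigr),
\]
and collecting like classes leaves $e_{(\pi/d+1)}$ with coefficient $1-\frac1{b_\pi}=\frac{b_\pi-1}{b_\pi}$ and $e_{(\pi+(d+1))}$ with coefficient $\frac1{b_\pi}$, which is exactly the stated path formula. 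The point needing care is that both operations---adjoining a singleton and the induction of \cref{prop:ind:e}---descend to congruence modulo $d+1$, so that $b_\pi$, being constant on each class, may be pulled out of the class sum.

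For the cycle I would use the cyclic labelling with edges $v_1v_2,\dots,v_dv_{d+1},v_{d+1}v_1$ and again apply \cref{prop:DC} to $e=v_dv_{d+1}$. Deleting $e$ gives the path $v_d-v_{d-1}-\cdots-v_1-v_{d+1}$, which is $P_{d+1}$ after a relabelling fixing the endpoint $v_{d+1}$, so $Y_{C_{d+1}\setminus e}\equiv Y_{P_{d+1}}$ by \cref{lem:relabel}; contracting $e$ merges $v_d$ and $v_{d+1}$ into the cycle $C_d$ on $v_1,\dots,v_d$. Thus $Y_{C_{d+1}}\equiv Y_{P_{d+1}}-Y_{C_d}\ind$. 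I would then argue by induction on $d$, with the triangle $C_3$ (obtained from $Y_{P_2}$) as the base case. In the inductive step I would insert the already-proved path formula for $Y_{P_{d+1}}$ together with the induction hypothesis $Y_{C_d}\equiv\sum_{(\rho)\subseteq\Pi_{d-1}}c'_{(\rho)}e_{(\rho+d)}$, apply \cref{prop:ind:e} to the latter, and re-express the $P_d$-coefficients $c_{(\pi)}$ through the $P_{d-1}$-coefficients $c'_{(\rho)}$ via the path recursion just established.

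The crux, and the step I expect to be the main obstacle, is the cancellation in this inductive step. Establishing $Y_{C_{d+1}}\equiv\sum c_{(\pi)}e_{(\pi+(d+1))}$ would follow once one checks that $Y_{C_d}\ind$ equals $Y_{P_{d+1}}-\sum_{(\pi)}c_{(\pi)}e_{(\pi+(d+1))}=\sum_{(\pi)}\frac{c_{(\pi)}(b_\pi-1)}{b_\pi}\bigl(e_{(\pi/d+1)}-e_{(\pi+(d+1))}\bigr)$. Here the classes $(\pi)$ of $Y_{P_d}$ of the form $(\rho/d)$, where $d$ is a singleton and $b_\pi=1$, are annihilated by the factor $b_\pi-1$, while the classes $(\rho+d)$ survive and, after the bookkeeping $b_{\rho+d}=b_\rho+1$, match the expansion of $Y_{C_d}\ind$ term by term. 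Carrying this tracking of block sizes and congruence classes through correctly---rather than the individual deletion--contraction reductions, which are routine---is where the real work lies.
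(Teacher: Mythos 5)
Your proof is correct: the path case is exactly the deletion--contraction computation $Y_{P_{d+1}} = Y_{P_d\,\uplus K_1} - Y_{P_d}\ind$ combined with \cref{prop:ind:e}, and your inductive treatment of the cycle case goes through, since the key cancellation works just as you say---classes in which $d$ is a singleton are annihilated by the factor $b_\pi-1$, while each surviving class is of the form $(\rho+d)$ with $c_{(\rho+d)}=c'_{(\rho)}/b_{\rho}$ (the map $(\rho)\mapsto(\rho+d)$ is injective on congruence classes and never collides with a singleton class), so that $\sum_{(\pi)}\tfrac{c_{(\pi)}(b_\pi-1)}{b_\pi}\bigl(e_{(\pi/d+1)}-e_{(\pi+(d+1))}\bigr)$ matches $Y_{C_d}\ind\equiv\sum_{(\rho)}\tfrac{c'_{(\rho)}}{b_\rho+1}\bigl(e_{(\rho+d/d+1)}-e_{(\rho+d+(d+1))}\bigr)$ term by term. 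Note that the paper itself gives no proof of this proposition, quoting it from \citet[Propositions 6.4 and 6.7]{GS01}; your argument is essentially the original Gebhard--Sagan one, assembled from the same tools (\cref{prop:DC}, \cref{lem:relabel}, \cref{prop:ind:e}, \cref{prop:Y:DisjointUnion:G:Km}) that the paper deploys in its own proofs.
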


We also need two more results that allow us to construct new $(e)$-positive graphs from old ones.
\cref{prop:Y:DisjointUnion:G:Km} is a straightforward corollary of 
\cite[Lemma 6.3]{GS01}.

\begin{proposition}\label{prop:Y:DisjointUnion:G:Km}
Let $G$ be a graph on $d$ vertices.
Let $K_m$ be the complete graph of order $m$,
whose vertices are labeled by 
$v_{d+1},\dots, v_{d+m}$. If 
\cref{eq:equiv:Y} holds,
then the disjoint union of $G$ and $K_m$ satisfies
\[
Y_{G\,\uplus K_m}
\equiv
\sum_{(\pi)\subseteq \Pi_d}
c_{(\pi)}e_{(\pi/\{d+1,\,\dots,\,d+m\})}.
\]
\end{proposition}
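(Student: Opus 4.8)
The plan is to reduce the statement to the multiplicativity of $Y$ under disjoint union, together with the identification of $Y_{K_m}$, and then to carry out the bookkeeping needed to pass from the exact $e$-expansion to the congruence-class expansion modulo $d+m$. Throughout I write $Y_G=\sum_{\sigma\in\Pi_d}c_\sigma e_\sigma$ for the exact expansion underlying \cref{eq:equiv:Y}, so that $c_{(\pi)}=\sum_{\sigma\in(\pi)}c_\sigma$.

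First I would establish, directly from the definition of the chromatic symmetric function in noncommuting variables, that
\[
Y_{G\,\uplus K_m}=Y_G\cdot e_{\{d+1,\dots,d+m\}},
\]
where the product concatenates the two blocks of variables and $\{d+1,\dots,d+m\}$ denotes the one-block partition of that set. Indeed, since $G$ and $K_m$ share neither vertices nor edges, a proper coloring of $G\uplus K_m$ is exactly an independent choice of a proper coloring of $G$ and one of $K_m$; because the vertex order lists $v_1,\dots,v_d$ before $v_{d+1},\dots,v_{d+m}$, each resulting monomial factors as a monomial of $Y_G$ times one of $Y_{K_m}$. This is the noncommuting refinement of $X_{G\sqcup H}=X_GX_H$ from \cref{prop:csf:disjoint}. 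Moreover $Y_{K_m}=e_{\{d+1,\dots,d+m\}}$, since in $K_m$ every two vertices are adjacent and hence all colors are distinct, which is precisely the defining constraint of $e_{\{d+1,\dots,d+m\}}$. Combining this with the multiplicativity $e_\sigma\cdot e_{\{d+1,\dots,d+m\}}=e_{\sigma/\{d+1,\dots,d+m\}}$ for disjoint index sets, which is the content of \cite[Lemma 6.3]{GS01}, I obtain
\[
Y_{G\,\uplus K_m}=\sum_{\sigma\in\Pi_d}c_\sigma\,e_{\sigma/\{d+1,\dots,d+m\}}.
\]

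Finally I would pass to congruence classes modulo $d+m$, and this is the step I expect to be the main obstacle, since the ambient modulus changes from $d$ to $d+m$. First I would check that $\sigma\equiv_d\sigma'$ implies $\sigma/\{d+1,\dots,d+m\}\equiv_{d+m}\sigma'/\{d+1,\dots,d+m\}$: the types agree because each side appends a single part $m$ to the equal types $\lambda(\sigma)=\lambda(\sigma')$, and the block containing $d+m$ has size $m$ on both sides. The subtlety is that the induced map on congruence classes need not be injective, since the size $b_{\sigma,d}$ is forgotten; so I would verify that the coefficients nonetheless aggregate correctly. Concretely, for a class $(\tau)$ modulo $d+m$ with $b_{\tau,d+m}=m$, both the coefficient produced by the displayed exact expansion and the coefficient $\sum c_{(\pi)}$ appearing on the claimed right-hand side collapse to $\sum_{\sigma:\ \lambda(\sigma)=\lambda(\tau)\setminus\{m\}}c_\sigma$, precisely because the mod-$(d+m)$ class of $\sigma/\{d+1,\dots,d+m\}$ depends on $\sigma$ only through $\lambda(\sigma)$. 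Matching these two coefficients is the crux, and once it is confirmed the claimed congruence follows.
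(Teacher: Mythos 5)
Your proof is correct. The paper itself offers no argument for this proposition: it simply declares it a straightforward corollary of \cite[Lemma 6.3]{GS01}, and what that citation encapsulates is exactly what you re-derive from the definitions, namely that $Y$ of a disjoint union (with the labels of $G$ listed before those of $K_m$) factors as the product of the two $Y$'s, that $Y_{K_m}=e_{\{d+1,\dots,d+m\}}$, and that $e_\sigma\cdot e_{\{d+1,\dots,d+m\}}=e_{\sigma/\{d+1,\dots,d+m\}}$ for disjoint index sets. So your write-up is a from-scratch verification of the same route rather than a different one. One remark: the step you single out as the ``crux'' is not actually needed. Once you know that $\sigma\equiv_d\sigma'$ implies $\sigma/\{d+1,\dots,d+m\}\equiv_{d+m}\sigma'/\{d+1,\dots,d+m\}$, you may simply group the exact expansion $Y_{G\,\uplus K_m}=\sum_{\sigma\in\Pi_d}c_\sigma e_{\sigma/\{d+1,\dots,d+m\}}$ by the mod-$d$ classes of $\sigma$ and collapse each term to its mod-$(d+m)$ class; this yields $\sum_{(\pi)\subseteq\Pi_d}c_{(\pi)}e_{(\pi/\{d+1,\dots,d+m\})}$ verbatim. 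The non-injectivity of the induced map on classes (the loss of $b_{\sigma,d}$) is harmless, because the claimed right-hand side is itself indexed by mod-$d$ classes, so any repeated terms $e_{(\tau)}$ simply add; no coefficient-matching argument is required. Your verification of that matching is correct, just superfluous.
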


\cref{thm:(e)-pos:G+K_n} is a restatement of \cite[Theorem 7.6]{GS01}.

\begin{theorem}[{\citeauthor{GS01}}]\label{thm:(e)-pos:G+K_n}
Given $n,d\ge 1$, let $G=(V_1,E_1)$ be a graph with vertex set $V_1=\{v_1, \dots, v_d\}$, and let $G+K_n=(V_2,E_2)$ be the graph with 
\begin{align*}
V_2&=V_1\cup \{v_{d+1},\, \dots,\, v_{d+n-1}\},\\
E_2&=E_1\cup \{v_iv_j\colon i,j\in [d,\, d+n-1]\}.
\end{align*}
If $Y_G$ is $(e)$-positive, 
then so is $Y_{G+K_n}$.
\end{theorem}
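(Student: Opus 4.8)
The plan is to induct on $n$. The case $n=1$ is immediate because $G+K_1=G$, so assume $G+K_{n-1}$ is $(e)$-positive and write $Y_{G+K_{n-1}}\equiv\sum_{(\pi)}c_{(\pi)}e_{(\pi)}$ with every $c_{(\pi)}\ge0$. By construction $G+K_n$ is obtained from $G+K_{n-1}$ by adjoining a new top vertex $v_{d+n-1}$ adjacent to the whole clique $\{v_d,\dots,v_{d+n-2}\}$, and the two highest labels $d+n-2$ and $d+n-1$ are joined by the clique edge $e=v_{d+n-2}v_{d+n-1}$. First I would apply \cref{prop:DC} to $e$. Contracting $e$ identifies $v_{d+n-1}$ with the clique vertex $v_{d+n-2}$, whose other neighbours already lie in the clique, so $(G+K_n)/e=G+K_{n-1}$ with its correct clique top vertex; this term is thus controlled by the inductive hypothesis, and \cref{prop:ind:e} rewrites $Y_{G+K_{n-1}}\ind$ explicitly as $\sum_{(\pi)}\frac{c_{(\pi)}}{b_\pi}\bigl(e_{(\pi/(d+n-1))}-e_{(\pi+(d+n-1))}\bigr)$.

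The substance of the proof is the deletion term $D=(G+K_n)\backslash e$, in which $v_{d+n-1}$ is coned over the sub-clique $\{v_d,\dots,v_{d+n-3}\}$ while $v_{d+n-2}$ remains coned over the same sub-clique; equivalently $D$ is two copies of $K_{n-1}$ glued along a common $K_{n-2}$. To evaluate $Y_D$ I would strip the cone-edges at $v_{d+n-1}$ one at a time by repeated use of \cref{prop:DC}, each time relabelling by \cref{lem:relabel} (which is legitimate since the relabellings fix the top vertex $d+n-1$) so that the edge removed is the top edge and the accompanying contraction collapses to $G+K_{n-1}$ with a clique top vertex. Telescoping these reductions expresses $Y_D$ through the disjoint union $(G+K_{n-1})\sqcup K_1$, evaluated by \cref{prop:Y:DisjointUnion:G:Km}, an integer multiple of $Y_{G+K_{n-1}}\ind$, and one residual contribution arising when the cone-edge meeting the distinguished attachment vertex $v_d$ is reached.

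Reassembling via \cref{prop:Y:DisjointUnion:G:Km} and \cref{prop:ind:e}, the dominant contributions to $Y_{G+K_n}$ take the form $c_{(\pi)}\bigl(\alpha\,e_{(\pi/(d+n-1))}+\beta\,e_{(\pi+(d+n-1))}\bigr)$, where $\beta\ge0$ automatically and $\alpha$ has the shape $1-\frac{t}{b_\pi}$ for an integer $t$ tied to the clique size. Hence the crux of the positivity is a lower bound guaranteeing $b_\pi\ge t$ among the classes with $c_{(\pi)}>0$; since the attached clique forces its vertices into a single block, I would build such a bound on the size of the block containing the top vertex into the inductive hypothesis. After the residual re-rooting term is incorporated and all congruence-class coefficients are checked to be nonnegative, \cref{prop:(e)toe} then upgrades the conclusion to ordinary $e$-positivity.

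The hardest point will be that last cone-edge in the analysis of $D$, the one meeting $v_d$: its contraction re-roots $G+K_{n-1}$ at the attachment vertex $v_d$, and this re-rooted graph need not be $(e)$-positive at all — already $P_3$ rooted at its central vertex has a negative elementary coefficient. One therefore cannot dispose of that term by positivity alone, but must expand it explicitly and exhibit the cancellation against the disjoint-union contribution. Keeping this bookkeeping consistent — in particular never letting a relabelling displace the distinguished vertex $v_d$ — while maintaining the lower bound on the top-block size, is where the argument demands the most care.
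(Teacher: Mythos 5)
The paper itself gives no proof of this statement: it is quoted as a restatement of \citet[Theorem~7.6]{GS01}, so your attempt can only be measured against what a correct argument needs (and against the analogous computations the paper does carry out, in \cref{thm:Y:CCm3}). The skeleton of your plan is the natural one and its first stage is sound: inducting on $n$, applying \cref{prop:DC} to the edge joining the two top clique vertices (whose contraction is indeed $G+K_{n-1}$), and telescoping the deletion graph, using \cref{lem:relabel} and the fact that swaps of interior clique vertices are automorphisms, one gets
\[
Y_{G+K_n}\equiv Y_{(G+K_{n-1})\,\uplus K_1}-(n-2)\,Y_{G+K_{n-1}}\ind-Y_{(G+K_{n-1})\circ(d,\,d+n-2)}\ind .
\]
You also correctly identify the last, re-rooted term as the crux, and your remark that $P_3$ rooted at its centre has a negative coefficient is accurate. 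The gap is that your proposal stops exactly there, and it is not merely unfinished bookkeeping: with the inductive hypothesis as you state it, the missing step \emph{cannot} be carried out. By \cref{prop:ind:relabel:d}, expanding $Y_{(G+K_{n-1})\circ(d,\,d+n-2)}\ind$ requires the sums $\sum_\pi c_\pi/b_{\pi,d}$ refined according to the block of the attachment vertex $v_d$ --- partition-level data weighted by the individual coefficients $c_\pi$ --- whereas $(e)$-positivity of $G+K_{n-1}$ only supplies the class sums $c_{(\pi)}$ modulo the top label, which do not determine those refined sums. The induction hypothesis must therefore be strengthened to an explicit expansion that tracks the block of $v_d$ as well; this is precisely what Gebhard and Sagan do, and what the present paper does in \cref{thm:Y:CCm3}, where the re-rooted term $Y_{C_{m-1}\,\uplus K_1}\circ(m,i)\ind$ is expanded using the explicit path and cycle coefficients from \cref{prop:path:cycle} before the cancellation is exhibited.

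Your fallback mechanism for positivity is, moreover, based on a false premise. The attached clique does \emph{not} force its vertices into a single block of the positively weighted classes: already for $P_3=P_2+K_2$ one has $Y_{P_3}\equiv\tfrac12\,e_{(\{1,2\}/3)}+\tfrac12\,e_{(\{1,2,3\})}$, and the first class carries positive weight although the top vertex $3$ sits in a singleton block while belonging to the clique $\{2,3\}$. So the proposed lower bound $b_\pi\ge t$ with $t$ tied to the clique size fails already at $t=2$, and it fails again at the next stage (the classes $(\pi/\{d+1\})$ produced by \cref{prop:path:cycle}-type recursions always reappear with positive weight). The claimed coefficient shape $1-t/b_\pi$ is also not what the computation produces: the coefficient of $e_{(\pi/(d+n-1))}$ acquires an additional term $-1/b_{\pi,d}$ per partition, which is not constant on congruence classes, so positivity cannot be decided class by class in the form you describe. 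In short, both load-bearing steps of your plan --- the explicit cancellation against the re-rooted term, and the positivity bound --- are respectively absent and incorrect as stated, and repairing them amounts to redoing the explicit-formula induction of \citet{GS01}.
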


\section{The $(e)$-positivity of the line graphs of tadpoles}
\label{sec:Line:Tpml}

The \emph{tadpole} graph $\mathrm{Tp}_{m,l}$
is obtained by identifying a vertex of the cycle $C_m$
with an end of the path $P_l$, see \cref{fig:tadpole}.
Tadpoles are special squid graphs that is investigated by \citet{MMW08}.
We first give the bivariate generating functions for the chromatic symmetric functions of tadpoles and their line graphs. Recall that the functions $E(z)$ and $F(z)$ are defined in \cref{sec:preliminary}.
\begin{figure}[h]
\begin{tikzpicture}[scale=0.3]
\input{tadpole}
\end{tikzpicture}
\caption{The tadpole graph $\mathrm{Tp}_{m,l}$.}\label{fig:tadpole}
\end{figure}

\begin{theorem}\label{thm:gf:tadpole}
The chromatic symmetric functions of tadpole graphs $\mathrm{Tp}_{m,l}$ can be computed by
\begin{equation}\label{rec:TPml}
X_{\mathrm{Tp}_{m,l}}=(m-1)X_{P_{m+l}}-\sum_{i=2}^{m-1}X_{P_{m+l-i}}X_{C_i},
\end{equation}
and their bivariate generating function is
\[
\sum_{m\ge2}\sum_{l\ge0}X_{\mathrm{Tp}_{m,l}}x^my^l
=\frac{x^2}{(x-y)^2}
\brk[s]3{\frac{x(x-y)E''(x)E(y)}{F(x)F(y)}
-y\brk3{\frac{E'(x)}{F(x)}-\frac{E'(y)}{F(y)}}}.
\]
\end{theorem}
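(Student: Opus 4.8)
The plan is to establish the recurrence \cref{rec:TPml} first and then obtain the bivariate generating function by summing it against $x^my^l$. For the recurrence I would view $\mathrm{Tp}_{m,l}$ as the path $P_{m+l}$ on $v_1,\dots,v_{m+l}$ together with the chord $v_1v_m$ closing the $m$-cycle, and shrink the cycle one unit at a time using the triple-deletion property (\cref{thm:rec:3del}). Concretely, I apply \cref{thm:rec:3del} to the three mutually non-adjacent vertices $v_1,v_{m-1},v_m$ in the base graph $G=P_{m-1}\sqcup P_{l+1}$ obtained by deleting the two edges $v_1v_m$ and $v_{m-1}v_m$. Setting $e_1=v_mv_1$, $e_2=v_1v_{m-1}$, $e_3=v_{m-1}v_m$, one checks directly that $G_{13}=\mathrm{Tp}_{m,l}$, $G_1=G_3=P_{m+l}$, $G_{23}=\mathrm{Tp}_{m-1,l+1}$, and $G_2=C_{m-1}\sqcup P_{l+1}$, so the suitably relabelled first relation of \cref{thm:rec:3del}, together with \cref{prop:csf:disjoint}, yields the one-step recurrence
\[
X_{\mathrm{Tp}_{m,l}}=X_{P_{m+l}}+X_{\mathrm{Tp}_{m-1,l+1}}-X_{C_{m-1}}X_{P_{l+1}}.
\]

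I would then induct on $m$. This step is valid for $m\ge4$, since the endpoints $v_1,v_{m-1}$ of $P_{m-1}$ must be non-adjacent, so I treat $m=2,3$ as base cases: $\mathrm{Tp}_{2,l}=P_{l+2}$ is immediate, and for the triangle-with-tail $\mathrm{Tp}_{3,l}$ a single use of \cref{thm:rec:3del} on $v_1,v_2,v_3$ (combining its two relations so as to cancel the non-path tree that otherwise appears) gives $X_{\mathrm{Tp}_{3,l}}=2X_{P_{l+3}}-X_{C_2}X_{P_{l+1}}$. Feeding the inductive hypothesis for $\mathrm{Tp}_{m-1,l+1}$ into the one-step recurrence and absorbing the new term $-X_{C_{m-1}}X_{P_{l+1}}$ as the $i=m-1$ summand telescopes exactly to \cref{rec:TPml}.

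For the generating function I would substitute \cref{rec:TPml} and split the sum as $A-B$, where $A=\sum_{m\ge2}\sum_{l\ge0}(m-1)X_{P_{m+l}}x^my^l$ and $B=\sum_{m\ge2}\sum_{l\ge0}\sum_{i=2}^{m-1}X_{C_i}X_{P_{m+l-i}}x^my^l$. Writing $\mathcal P(z)=\sum_{n\ge0}X_{P_n}z^n=E(z)/F(z)$ and $\mathcal C(z)=\sum_{n\ge2}X_{C_n}z^n=z^2E''(z)/F(z)$ from \cref{prop:gf:path:cycle}, the diagonal identity $\sum_{m+l=n}x^my^l=(x^{n+1}-y^{n+1})/(x-y)$ gives the closed form $g(x,y):=\sum_{m,l\ge0}X_{P_{m+l}}x^my^l=\frac{x\mathcal P(x)-y\mathcal P(y)}{x-y}$. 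Reindexing $B$ by $j=m-i$ factors it as $B=\mathcal C(x)\bigl(g(x,y)-\mathcal P(y)\bigr)$, while the weight $m-1$ in $A$ is produced by the operator $x\partial_x-1$, giving $A=(x\partial_x-1)g+\mathcal P(y)$ after correcting the spurious $m=0$ term. Hence
\[
\sum_{m\ge2}\sum_{l\ge0}X_{\mathrm{Tp}_{m,l}}x^my^l=(x\partial_x-1)g+\mathcal P(y)-\mathcal C(x)\bigl(g(x,y)-\mathcal P(y)\bigr).
\]

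The hard part will be simplifying this expression to the stated closed form. I expect it to rest on the two identities $F'(z)=-zE''(z)$ and $E(z)-zE'(z)=F(z)$ coming from the definition of $F$; the latter is equivalent to $\mathcal P(z)=1+z\,E'(z)/F(z)$, which is precisely what converts the combination of $\mathcal P(x)$ and $\mathcal P(y)$ produced by $x\partial_x g$ into the difference $E'(x)/F(x)-E'(y)/F(y)$ in the target, with the two powers of $1/(x-y)$ assembling into the prefactor $\frac{x^2}{(x-y)^2}$. Differentiating the quotient $g$ and tracking the $(x-y)$ denominators is the only delicate computation; a useful sanity check is that setting $y=0$ collapses the whole expression to $\mathcal C(x)=\sum_{m\ge2}X_{C_m}x^m$, in agreement with $\mathrm{Tp}_{m,0}=C_m$.
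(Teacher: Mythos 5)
Your proposal is correct and follows essentially the same route as the paper's proof: the same one-step recurrence $X_{\mathrm{Tp}_{m,l}}=X_{P_{m+l}}+X_{\mathrm{Tp}_{m-1,l+1}}-X_{C_{m-1}}X_{P_{l+1}}$ obtained from \cref{thm:rec:3del} and \cref{prop:csf:disjoint}, telescoped down to the base cases $m\le 3$, and then translated into the bivariate generating function. The only difference is one of detail, not of method: you spell out the vertex/edge choices for the triple deletion, the $m=3$ base case, and the diagonal-sum manipulation that the paper dismisses as ``standard techniques of generating functions,'' and your sketched simplification (using $F'(z)=-zE''(z)$ and $E(z)/F(z)=1+zE'(z)/F(z)$) does indeed close to the stated formula.
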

\begin{proof}
Let $m\ge 2$ and $l\ge0$. 
By \cref{prop:csf:disjoint,thm:rec:3del}, we obtain
\[
X_{\mathrm{Tp}_{m,l}}
=\begin{cases}
X_{P_{m+l}}+X_{\mathrm{Tp}_{m-1,\,l+1}}-X_{C_{m-1}}X_{P_{l+1}},&\text{if $m\ge 4$},\\
2X_{P_{l+3}}-X_{C_2}X_{P_{l+1}},&\text{if $m=3$},\\
X_{P_{l+2}},&\text{if $m=2$}.
\end{cases}
\]
If $m\ge 4$, then we can use the formula above iteratedly which gives
\[
X_{\mathrm{Tp}_{m,l}}
=(m-3)X_{P_{m+l}}+X_{\mathrm{Tp}_{3,l}}-\sum_{i=3}^{m-1}X_{C_i}X_{P_{m+l-i}},
\]
which simplifies to \cref{rec:TPml}.
It is routine to verify its truth for $m\le 3$.
Using standard techniques of generating functions, 
one may translate \cref{rec:TPml} into the generating function equivalently.
\end{proof}

Let $u$ and $v$ be two adjacent vertices on the cycle $C_m$,
and let $w$ be an end of the path $P_l$.
The line graph $L(\mathrm{Tp}_{m,l})$ of the tadpole $\mathrm{Tp}_{m,l}$ can be obtained
by adding the edges $uw$ and $vw$ to the disjoint union of $C_m$ and $P_l$,
see~\cref{fig:Ltadpole}.
\begin{figure}[htbp]
\begin{tikzpicture}[rotate=90, scale=.3]
\input{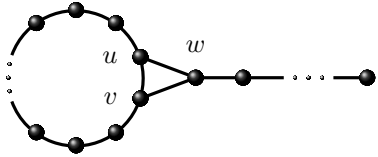}
\end{tikzpicture}
\caption{The line graph of a tadpole.}
\label{fig:Ltadpole}
\end{figure}

\begin{theorem}\label{thm:ep:tadpole:Ltadpole}
The chromatic symmetric functions of the line graphs $L(\mathrm{Tp}_{m,l})$ can be computed by
\begin{equation}\label{fml:csf:Ltadpole}
X_{L(\mathrm{Tp}_{m,l})}
=X_{P_l} X_{C_m}
+2\sum_{k\ge 1} X_{P_{l-k}} X_{C_{m+k}}
-2lX_{P_{m+l}},
\qquad\text{for $m\ge 2$ and $l\ge 0$,}
\end{equation}
or alternatively by 
\begin{equation}\label{rec:Ltadpole:tadpole}
X_{L(\mathrm{Tp}_{m,l})}=2X_{\mathrm{Tp}_{m,l}}-X_{C_m}X_{P_l},\qquad\text{for $m\ge 2$ and $l\ge 0$}.
\end{equation}
Their bivariate generating function is
\[
\sum_{m\ge 2}\sum_{l\ge0}X_{L(\mathrm{Tp}_{m,l})} x^m y^l
=\frac{x^2}{(x-y)^2}
\brk[s]3{\frac{(x^2-y^2)E''(x)E(y)}{F(x)F(y)}
-2y\brk3{\frac{E'(x)}{F(x)}-\frac{E'(y)}{F(y)}}}.
\]
\end{theorem}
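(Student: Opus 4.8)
The plan is to prove the three assertions in the order \eqref{rec:Ltadpole:tadpole}, then \eqref{fml:csf:Ltadpole}, then the generating function, since the latter two both fall out cleanly once \eqref{rec:Ltadpole:tadpole} is established. The engine for \eqref{rec:Ltadpole:tadpole} is the triple-deletion property (\cref{thm:rec:3del}). First I would pass to the graph $H$ obtained from $L(\mathrm{Tp}_{m,l})$ by deleting the three edges $uv$, $vw$, $wu$ of its triangle. Since $u,v$ are adjacent on the cycle, deleting $uv$ turns $C_m$ into the path $P_m$ with endpoints $u,v$, so $H=P_m\sqcup P_l$, where $u,v$ are the ends of the $P_m$-part and $w$ is an end of the $P_l$-part. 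These three vertices are pairwise non-adjacent, so \cref{thm:rec:3del} applies with $e_1=uv$, $e_2=vw$, $e_3=wu$. The crucial step is to identify the relevant spanning subgraphs: $H_1=C_m\sqcup P_l$, $H_2=H_3=P_{m+l}$, $H_{12}=H_{13}=\mathrm{Tp}_{m,l}$, $H_{23}=\mathrm{Tp}_{m+1,l-1}$, and $H_{123}=L(\mathrm{Tp}_{m,l})$.

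Then I would apply both identities of \cref{thm:rec:3del}. The first, $X_{H_{12}}=X_{H_1}+X_{H_{23}}-X_{H_3}$, reads $X_{\mathrm{Tp}_{m,l}}=X_{C_m}X_{P_l}+X_{\mathrm{Tp}_{m+1,l-1}}-X_{P_{m+l}}$, where \cref{prop:csf:disjoint} has been used on $H_1$; the second, $X_{H_{123}}=X_{H_{12}}+X_{H_{23}}-X_{H_2}$, reads $X_{L(\mathrm{Tp}_{m,l})}=X_{\mathrm{Tp}_{m,l}}+X_{\mathrm{Tp}_{m+1,l-1}}-X_{P_{m+l}}$. Eliminating the common term $X_{\mathrm{Tp}_{m+1,l-1}}$ between these two relations gives \eqref{rec:Ltadpole:tadpole} immediately. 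For \eqref{fml:csf:Ltadpole}, I would reuse the first relation, now read as the tail-shortening recurrence $X_{\mathrm{Tp}_{m,l}}=X_{\mathrm{Tp}_{m+1,l-1}}+X_{C_m}X_{P_l}-X_{P_{m+l}}$, and telescope it $l$ times down to $\mathrm{Tp}_{m+l,0}=C_{m+l}$. Because the order $m+l$ is invariant along the recurrence, the path terms collapse to $-lX_{P_{m+l}}$, yielding $X_{\mathrm{Tp}_{m,l}}=\sum_{k=0}^{l}X_{P_{l-k}}X_{C_{m+k}}-lX_{P_{m+l}}$. Substituting this into \eqref{rec:Ltadpole:tadpole} and peeling off the $k=0$ term (which cancels half of $X_{C_m}X_{P_l}$) produces \eqref{fml:csf:Ltadpole}.

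For the generating function I would apply $\sum_{m\ge2}\sum_{l\ge0}(\cdot)\,x^my^l$ to \eqref{rec:Ltadpole:tadpole}. The tadpole term contributes twice the generating function of \cref{thm:gf:tadpole}, while $\sum_{m,l}X_{C_m}X_{P_l}x^my^l$ factors as $\frac{x^2E''(x)}{F(x)}\cdot\frac{E(y)}{F(y)}$ by \cref{prop:csf:disjoint} together with \cref{prop:gf:path:cycle}. Pulling out the common factor $\frac{x^2}{(x-y)^2}$, the coefficient of $E''(x)E(y)/F(x)F(y)$ simplifies via $\frac{2x^3}{x-y}-x^2=\frac{x^2(x+y)}{x-y}=\frac{x^2(x^2-y^2)}{(x-y)^2}$, while the $E'$-terms merely double, and this reproduces the stated expression.

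The hard part will be the bookkeeping of the eight spanning subgraphs $H_S$ and, in particular, pinning down the tadpole indices correctly against the paper's convention in which $\mathrm{Tp}_{m,l}$ has $m+l$ vertices; an off-by-one in identifying $H_{12}$ or $H_{23}$ would turn \eqref{rec:Ltadpole:tadpole} into $2X_{\mathrm{Tp}_{m,l+1}}-X_{C_m}X_{P_l}$, so these identifications must be made carefully (ideally with reference to \cref{fig:Ltadpole}). I would also need to dispose of the degenerate cases $l=0$, where $L(\mathrm{Tp}_{m,0})=C_m$ and the vertex $w$ together with the triangle is absent, and the small values of $m$ where $u,v$ degenerate; in these cases \eqref{rec:Ltadpole:tadpole} is checked directly, which is routine but should be recorded.
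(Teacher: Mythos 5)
Your proposal is correct and follows essentially the same route as the paper: triple deletion (\cref{thm:rec:3del}) together with \cref{prop:csf:disjoint} applied to the triangle $uvw$ yields \eqref{rec:Ltadpole:tadpole}, and then \eqref{fml:csf:Ltadpole} and the generating function follow from \cref{prop:gf:path:cycle} and \cref{thm:gf:tadpole} by the kind of routine manipulation you carry out (your telescoping of the tadpole recurrence is one clean way to do what the paper leaves as "routine calculation"). Your subgraph identifications $H_1=C_m\sqcup P_l$, $H_2=H_3=P_{m+l}$, $H_{12}=\mathrm{Tp}_{m,l}$, $H_{23}=\mathrm{Tp}_{m+1,l-1}$, $H_{123}=L(\mathrm{Tp}_{m,l})$ and the final simplification $\frac{2x^3}{x-y}-x^2=\frac{x^2(x^2-y^2)}{(x-y)^2}$ all check out.
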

\begin{proof}
By \cref{prop:csf:disjoint,thm:rec:3del}, 
we obtain \cref{rec:Ltadpole:tadpole}.
Together with \cref{prop:gf:path:cycle,thm:gf:tadpole}, 
one may deduce \cref{fml:csf:Ltadpole} and the generating function by routine calculation.
\end{proof}

We did not succeed in proving the $e$-positivity of the line graphs $L(\mathrm{Tp}_{m,l})$ by analyzing their generating function.
Yet powerful enough is \cref{prop:(e)toe} to complete this job.

\begin{theorem}
Let $m\ge 3$ and $l\ge 1$.
Let $C_m$ be the cycle $v_1\dotsm v_mv_1$,
and $P_l$ the path $v_{m+1}\dotsm v_{m+l}$.
Then the line graph $L(\mathrm{Tp}_{m,l})$
that is labeled by adding the edges $v_1v_{m+1}$ and $v_mv_{m+1}$ in the disjoint union $C_m\uplus P_l$ is $(e)$-positive.
\end{theorem}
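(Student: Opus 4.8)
The plan is to induct on the path length $l$, reducing everything to the base case $l=1$. For the inductive step, note that for $l\ge 2$ the graph $L(\mathrm{Tp}_{m,l})$ is obtained from $L(\mathrm{Tp}_{m,l-1})$ by attaching the pendant vertex $v_{m+l}$ to the path-end $v_{m+l-1}$; in the language of \cref{thm:(e)-pos:G+K_n} this is precisely $L(\mathrm{Tp}_{m,l-1})+K_2$. Hence, once the base case is established, \cref{thm:(e)-pos:G+K_n} propagates $(e)$-positivity to all $l\ge 1$ automatically (and \cref{prop:(e)toe} then upgrades this to the $e$-positivity promised in the introduction).

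It remains to treat $B:=L(\mathrm{Tp}_{m,1})$, the cycle $C_m=v_1\dotsm v_mv_1$ together with the apex $v_{m+1}$ joined to the adjacent cycle-vertices $v_1$ and $v_m$. Since $v_mv_{m+1}$ is an edge joining the two largest-labelled vertices, I would apply the deletion--contraction relation \cref{prop:DC} to $e=v_mv_{m+1}$. Deleting $e$ leaves the cycle $C_m$ with a pendant $v_{m+1}$ at $v_1$, call it $T$; contracting $e$ merges the triangle back into the standard cycle $C_m$. Thus
\[
Y_B=Y_T-Y_{C_m}\ind.
\]

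To expand $Y_T$ I would first rotate the cycle labels by $i\mapsto i-1\pmod m$ (fixing $m+1$), which by \cref{lem:relabel} preserves the congruence class and moves the pendant onto the top-labelled cycle-vertex $v_m$ while keeping the cycle standard. A second application of \cref{prop:DC} to the pendant edge, together with \cref{prop:Y:DisjointUnion:G:Km} applied with $K_1$ and with \cref{prop:ind:e}, gives $Y_T\equiv Y_{C_m\uplus K_1}-Y_{C_m}\ind$, so that $Y_B\equiv Y_{C_m\uplus K_1}-2\,Y_{C_m}\ind$. Writing the cycle expansion as $Y_{C_m}\equiv\sum_{(\pi)\subseteq\Pi_m}c_{(\pi)}e_\pi$ and substituting the formulas of \cref{prop:Y:DisjointUnion:G:Km,prop:ind:e}, I expect to arrive at
\[
Y_B\equiv\sum_{(\pi)\subseteq\Pi_m}c_{(\pi)}
\brk3{\frac{b_\pi-2}{b_\pi}\,e_{(\pi/\{m+1\})}+\frac{2}{b_\pi}\,e_{(\pi+(m+1))}}.
\]

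The one genuinely delicate point---and the step I expect to be the main obstacle---is the coefficient $(b_\pi-2)/b_\pi$, which is negative exactly when the block of $\pi$ containing the top element $m$ is a singleton. The rescue is a structural feature of cycles recorded in \cref{prop:path:cycle}: since $Y_{C_m}$ is built from $Y_{P_{m-1}}$ by inserting $m$ into the block already containing $m-1$, in \emph{every} term of the cycle expansion the element $m$ lies in a block of size at least two, that is, $b_\pi\ge 2$. Consequently $(b_\pi-2)/b_\pi\ge 0$, and as $c_{(\pi)}\ge 0$ because $C_m$ is $(e)$-positive, all coefficients above are nonnegative. This establishes the base case, and the induction on $l$ completes the proof.
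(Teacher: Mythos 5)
Your proof is correct and follows essentially the same route as the paper: the same deletion--contraction reduction (via the same rotation relabeling) to $Y_{L(\mathrm{Tp}_{m,1})}\equiv Y_{C_m\,\uplus K_1}-2\,Y_{C_m}\ind$, followed by the same recursive application of \cref{thm:(e)-pos:G+K_n} to handle $l\ge 2$. The only cosmetic difference is bookkeeping: you keep the expansion indexed by $\Pi_m$ and justify nonnegativity by the fact that $b_\pi\ge 2$ for every class occurring in $Y_{C_m}$, whereas the paper pulls the cycle expansion back to the path expansion over $\Pi_{m-1}$, where the same coefficients appear manifestly as $(b_\rho-1)/(b_\rho+1)\ge 0$.
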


\begin{proof}
We first show it for $l=1$. Let $G=L(\mathrm{Tp}_{m,1})$.
By \cref{prop:DC},  
\begin{equation}\label{pf:Y:L:Tp:m:1}
Y_G
= Y_{\mathrm{Tp}_{m,1}}-Y_{C_m}\ind,
\end{equation}
see \cref{fig:tadpole} with $l=1$ for the labeling of the graph $\mathrm{Tp}_{m,1}$.
By \cref{lem:relabel},
we can relabel the vertices of $\mathrm{Tp}_{m,1}$
so that the resulting graph $\mathrm{Tp}_{m,1}'$ is obtained by adding the edge $v_1v_m$ onto the path $v_1\dotsm v_{m+1}$.
Applying \cref{prop:DC} to $\mathrm{Tp}_{m,1}'$ and by \cref{pf:Y:L:Tp:m:1}, we can infer that
\begin{equation}\label{pf:Y:LTp:m1}
Y_G
\equiv Y_{\mathrm{Tp}_{m,1}'}-Y_{C_m}\ind
\equiv Y_{C_m\,\uplus K_1}-2Y_{C_m}\ind.
\end{equation}
Suppose that
\begin{equation}\label{eq:Y:P:m-1}
Y_{P_{m-1}}
\equiv
\sum_{(\rho)\subseteq \Pi_{m-1}}c_{(\rho)}e_{(\rho)}.
\end{equation}
By \cref{prop:path:cycle}, 
\[
c_{(\rho)}\ge 0
\quad\text{and}\quad
Y_{C_m}\equiv \sum_{(\rho)\subseteq \Pi_{m-1}}c_{(\rho)}e_{(\rho+m)}.
\]
By \cref{prop:Y:DisjointUnion:G:Km}, 
\begin{equation}\label{pf:Cm+K1}
Y_{C_m\,\uplus K_1}
\equiv \sum_{(\rho)\subseteq \Pi_{m-1}}c_{(\rho)}e_{(\rho+m/m+1)}.
\end{equation}
By \cref{prop:ind:e},
\begin{align}
Y_{C_m}\ind
\equiv \sum_{(\rho)\subseteq \Pi_{m-1}}c_{(\rho)}e_{(\rho+m)}\ind
\equiv \sum_{(\rho)\subseteq \Pi_{m-1}} 
\frac{c_{(\rho)}}{b_{\rho}+1}
\brk2{e_{(\rho+m/ m+1)}-e_{(\rho+m+(m+1))}}.\label{pf:Cm:ind}
\end{align}
Therefore, substituting \cref{pf:Cm+K1,pf:Cm:ind} into \cref{pf:Y:LTp:m1}, we obtain
\[
Y_G
\equiv
\sum_{(\rho)\subseteq \Pi_{m-1}}
\frac{c_{(\rho)}}{b_{\rho}+1}
\brk2{(b_{\rho}-1)e_{(\rho +m/ m+1)}+2e_{(\rho +m+(m+1))}}.
\]
Since $b_{\rho}\ge 1$ for all $\rho\in \Pi_{m-1}$, we obtain that $Y_G$ is $(e)$-positive.

Applying \cref{thm:(e)-pos:G+K_n} recursively yields that
$Y_{L(\mathrm{Tp}_{m,l})}$ is $(e)$-positive.
\end{proof}

\begin{corollary}\label{coro:LTp}
The line graphs $L(\mathrm{Tp}_{m,l})$ are $e$-positive for any integers $m\ge 3$ and $l\ge 1$.
\end{corollary}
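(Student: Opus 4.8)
The plan is to obtain the corollary as an immediate consequence of the $(e)$-positivity just established, combined with \cref{prop:(e)toe}. The preceding theorem shows that, for the labeling in which $C_m=v_1\dotsm v_mv_1$, $P_l=v_{m+1}\dotsm v_{m+l}$, and the edges $v_1v_{m+1}$ and $v_mv_{m+1}$ are adjoined to $C_m\uplus P_l$, the chromatic symmetric function $Y_{L(\mathrm{Tp}_{m,l})}$ in noncommuting variables expands with nonnegative coefficients over the congruence classes $e_{(\pi)}$; that is, $L(\mathrm{Tp}_{m,l})$ is $(e)$-positive. So the remaining task is purely one of transferring this refined positivity down to the ordinary chromatic symmetric function.

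First I would invoke \cref{prop:(e)toe}, which asserts that every $(e)$-positive graph is $e$-positive. Recall that its proof passes from the noncommuting expansion $Y_G\equiv\sum_{(\pi)}c_{(\pi)}e_\pi$ to the commutative image $X_G=\sum_{(\pi)}c_{(\pi)}\lambda(\pi)!\,e_{\lambda(\pi)}$; since each $c_{(\pi)}\ge0$ and each factorial $\lambda(\pi)!>0$, the right-hand side is a nonnegative combination of the $e_\lambda$, whence $X_G$ is $e$-positive. Applying this with $G=L(\mathrm{Tp}_{m,l})$ then yields the $e$-positivity asserted for all $m\ge3$ and $l\ge1$.

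The only subtlety worth recording is that $(e)$-positivity is defined relative to a fixed vertex labeling, whereas $e$-positivity is a property of $X_G$ alone and is therefore labeling-independent; it thus suffices that the theorem exhibits a single labeling under which the graph is $(e)$-positive, which it does. Consequently there is no genuine obstacle at this last stage, since the entire weight of the argument already rests in the theorem, where \cref{prop:DC,lem:relabel,prop:ind:e,prop:path:cycle,thm:(e)-pos:G+K_n} are marshalled to pin down the $e_{(\pi)}$-expansion. I would nonetheless emphasize that this passage through noncommuting variables is essential rather than cosmetic: as the authors note, a direct attack on the $e$-positivity via the bivariate generating function of \cref{thm:ep:tadpole:Ltadpole} did not succeed, and it is precisely the finer bookkeeping afforded by $Y_G$ that renders the positivity transparent.
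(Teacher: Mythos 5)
Your proposal is correct and coincides with the paper's own argument: the paper derives \cref{coro:LTp} immediately from \cref{prop:(e)toe} applied to the $(e)$-positivity established in the preceding theorem, exactly as you do. Your added remark that $e$-positivity is labeling-independent, so one $(e)$-positive labeling suffices, is a sound (if unstated in the paper) clarification rather than a deviation.
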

\begin{proof}
Immediate by \cref{prop:(e)toe}.
\end{proof}

\begin{corollary}\label{coro:e:tadpole}
The tadpoles $\mathrm{Tp}_{m,l}$ are $e$-positive for any integers $m\ge 3$ and $l\ge 1$.
\end{corollary}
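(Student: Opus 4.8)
The plan is to solve the relation \cref{rec:Ltadpole:tadpole} for $X_{\mathrm{Tp}_{m,l}}$ and then feed in the $e$-positivity facts already established. Since $m\ge 3$ and $l\ge 1$ lie in the range $m\ge 2$, $l\ge 0$ for which \cref{rec:Ltadpole:tadpole} holds, I may rearrange it as
\[
X_{\mathrm{Tp}_{m,l}}=\frac{1}{2}\brk2{X_{L(\mathrm{Tp}_{m,l})}+X_{C_m}X_{P_l}}.
\]
It therefore suffices to show that each of the two summands on the right-hand side is $e$-positive, since a nonnegative scalar multiple of a sum of $e$-positive functions is again $e$-positive.

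The first summand $X_{L(\mathrm{Tp}_{m,l})}$ is $e$-positive by \cref{coro:LTp}, which has just been proved. For the second summand I would argue as follows. By \cref{prop:gf:path:cycle}, both the cycle $C_m$ and the path $P_l$ are $e$-positive, so $X_{C_m}$ and $X_{P_l}$ are each a nonnegative combination of elements of the $e$-basis. The key elementary observation is that a product of two $e$-positive symmetric functions is $e$-positive: because $e_\lambda e_\mu=e_{\lambda\cup\mu}$ is the concatenation of the two partitions, every product of $e$-basis elements is itself an $e$-basis element, and expanding $X_{C_m}X_{P_l}$ and collecting terms then produces nonnegative coefficients. (One may, if desired, read this product as $X_{C_m\sqcup P_l}$ via \cref{prop:csf:disjoint}, but that interpretation is not needed.)

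Combining the two, $X_{L(\mathrm{Tp}_{m,l})}+X_{C_m}X_{P_l}$ is $e$-positive, and scaling by $1/2$ preserves the nonnegativity of the coefficients; hence $X_{\mathrm{Tp}_{m,l}}$ is $e$-positive. I do not expect a genuine obstacle in this corollary: all of the difficulty has already been absorbed into \cref{coro:LTp}, and the remaining content is the closure of $e$-positivity under products and nonnegative linear combinations. If anything deserves a word of care, it is precisely the product step, since multiplication can destroy positivity in other bases (for instance the $p$- or $s$-bases) even though it behaves so transparently for the $e$-basis.
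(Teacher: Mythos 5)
Your proof is correct and is essentially the paper's own argument: the paper's proof of this corollary just cites \cref{rec:Ltadpole:tadpole} and \cref{coro:LTp}, i.e., exactly your rearrangement $X_{\mathrm{Tp}_{m,l}}=\tfrac{1}{2}\bigl(X_{L(\mathrm{Tp}_{m,l})}+X_{C_m}X_{P_l}\bigr)$ combined with the $e$-positivity of the line graph, of $C_m$ and $P_l$, and of products of $e$-positive functions. Your explicit justification of the product step (via $e_\lambda e_\mu=e_{\lambda\cup\mu}$, or equivalently reading $X_{C_m}X_{P_l}$ as $X_{C_m\sqcup P_l}$ by \cref{prop:csf:disjoint}) is precisely what the paper leaves implicit in the word ``Immediate.''
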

\begin{proof}
Immediate by \cref{rec:Ltadpole:tadpole,coro:LTp}.
\end{proof}

In fact, tadpoles labeled as in \cref{fig:tadpole} are $(e)$-positive, which is a
corollary of \cref{prop:path:cycle,thm:(e)-pos:G+K_n}, see also~\cite{LLWY21}.

\section{The $(e)$-positivity of the cycle-chord graphs $\mathrm{CC}_{m,3}$}
\label{sec:Y:CCm3}

Regarding the line graphs $L(\mathrm{Tp}_{m,1})$ as obtained by identifying an edge of the cycles $C_m$ with an edge of the triangle $C_3$,
we extend this $(e)$-positive graph class a bit by considering the graphs that are obtained by identifying an edge in $C_m$ with an edge of the rectangle $C_4$.
In more general, one may consider the \emph{cycle-chord graph} $\mathrm{CC}_{a,b}$ that is obtained by 
identifying an edge of $C_{a+1}$ with an edge of~$C_{b+1}$, where $a,b\ge1$.
In particular, the graph
$\mathrm{CC}_{m,2}$ is $L(\mathrm{Tp}_{m,1})$.
\begin{figure}[htbp]
\begin{tikzpicture}[rotate=90, scale=.4]
\input{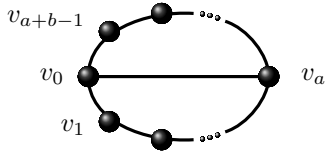}
\end{tikzpicture}
\caption{The cycle-chord graph $\mathrm{CC}_{a,b}$.}
\label{fig:CCml}
\end{figure}

In this section, we first present the bivariate generating function for the chromatic symmetric functions of the cycle-chord graphs $\mathrm{CC}_{a,b}$, and then
show the $(e)$-positivity of $\mathrm{CC}_{m,3}$.

\begin{theorem}\label{thm:gf:cycle-chord}
The generating function of cycle-chord graphs $\mathrm{CC}_{a,b}$ is
\[
\sum_{a,b\ge1}X_{\mathrm{CC}_{a,b}}x^ay^b
=\frac{xy}{(x-y)^2}\brk[s]3{
\frac{x^2E''(x)E(y)+y^2E''(y)E(x)}{F(x)F(y)}
-\frac{2xy}{x-y}\brk3{
\frac{E'(x)}{F(x)}-\frac{E'(y)}{F(y)}}}.
\]
\end{theorem}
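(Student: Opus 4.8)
The plan is to derive a chromatic-symmetric-function recurrence for $\mathrm{CC}_{a,b}$ analogous to \cref{rec:TPml} and \cref{fml:csf:Ltadpole}, and then convert that recurrence into the claimed bivariate generating function by standard manipulations. The starting point is that $\mathrm{CC}_{a,b}$ consists of two cycles $C_{a+1}$ and $C_{b+1}$ sharing a common edge, say $v_0v_a$ (using the labeling of \cref{fig:CCml}). The most natural reduction is to apply the triple-deletion property of \cref{thm:rec:3del} to the shared edge structure, or equivalently to express $X_{\mathrm{CC}_{a,b}}$ in terms of quantities already understood — namely paths, cycles, and tadpoles.

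First I would fix the shared edge $e=v_0v_a$ and analyze $\mathrm{CC}_{a,b}$ by deletion/triple-deletion around a suitable triangle of non-adjacent vertices, exactly as was done to obtain \cref{rec:Ltadpole:tadpole} from the tadpole. I expect the cleanest route to mirror the $L(\mathrm{Tp}_{m,l})$ computation: observe that $\mathrm{CC}_{a,b}$ is built by gluing $C_{a+1}$ and $C_{b+1}$ along an edge, and that deleting or contracting the chord $v_0v_a$ relates $\mathrm{CC}_{a,b}$ to the cycle $C_{a+b}$ (when the chord is deleted, the two cycles merge into one large cycle of order $a+b$) and to products involving smaller cycles and paths (when the chord is contracted or when we apply \cref{thm:rec:3del}). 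This should yield a formula of the shape
\[
X_{\mathrm{CC}_{a,b}}
=\alpha\,X_{C_{a+b}}
+\sum_{i}\beta_i\,X_{C_{\cdot}}X_{C_{\cdot}}
+\cdots,
\]
symmetric in $a$ and $b$, with explicit small integer or linear-in-$(a,b)$ coefficients. The symmetry $\mathrm{CC}_{a,b}=\mathrm{CC}_{b,a}$ is a useful sanity check and explains the symmetric form $x^2E''(x)E(y)+y^2E''(y)E(x)$ appearing in the target.

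Next I would feed the recurrence into generating functions. The building blocks are already available: \cref{prop:gf:path:cycle} gives $\sum_n X_{P_n}z^n=E(z)/F(z)$ and $\sum_{n\ge2}X_{C_n}z^n=z^2E''(z)/F(z)$, so every term in the recurrence becomes a known rational function in $E,F,E',E''$. The factor $xy/(x-y)^2$ and the antisymmetric combination $\tfrac{1}{x-y}(E'(x)/F(x)-E'(y)/F(y))$ in the statement are exactly the artifacts one obtains when summing a convolution like $\sum_{a,b}X_{P_{a-k}}X_{C_{b+k}}x^ay^b$ over the shift parameter $k$, so I would expect the computation to parallel the derivation of the generating function in \cref{thm:ep:tadpole:Ltadpole}, only symmetrized in the two cycle variables. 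Collecting the diagonal contribution (the single merged cycle $C_{a+b}$, which produces a Cauchy-type diagonal and hence the $1/(x-y)$ poles) against the product contributions should assemble the stated closed form.

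\textbf{The main obstacle} I anticipate is not any single generating-function identity but bookkeeping the boundary and low-order cases correctly. The cycle and path generating functions start at $n=2$ and $n=0$ respectively, while $\mathrm{CC}_{a,b}$ is only defined for $a,b\ge1$, and the shared edge forces degenerate behavior when $a=1$ or $b=1$ (where one of the ``cycles'' $C_{a+1}=C_2$ is a multi-edge). Ensuring that the recurrence holds uniformly — or else treating the edge cases separately and checking they are consistent with the analytic continuation implicit in the $1/(x-y)^2$ factor — is where the care is needed; the poles at $x=y$ are removable, and verifying that the numerator vanishes to the correct order along the diagonal is the delicate step that validates the final symmetric expression.
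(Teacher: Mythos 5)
Your plan is not the route the paper takes, and as written it contains a genuine gap: the recurrence on which everything hinges is never actually derived, and the shape you guess for it is not what the method produces. The paper's proof does not use \cref{thm:rec:3del} at all; it applies the power-sum expansion of \cref{prop:csf} to $\mathrm{CC}_{a,b}$ directly, splitting the sum over edge subsets according to whether the subset contains the chord $v_0v_a$. The chord-free subsets sum to $X_{C_{a+b}}$, and the chord-containing subsets are parametrized by the component containing the chord, yielding the exact identity \cref{rec:CC:a:b}, whose terms are power sums $p_{i+j}$ times products of path functions with coefficients $i\cdot j$; the generating function then follows from \cref{gf:p} and \cref{prop:gf:path:cycle}. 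If instead you insist on triple deletion, the configuration you would need to identify and justify is this: delete the chord $v_0v_a$ and the cycle edge $v_av_{a+1}$ from $\mathrm{CC}_{a,b}$, leaving a path, and apply \cref{thm:rec:3del} to $(u,v,w)=(v_0,v_a,v_{a+1})$ with $e_1=v_0v_a$, $e_2=v_av_{a+1}$, $e_3=v_{a+1}v_0$; then $G_{12}=\mathrm{CC}_{a,b}$, $G_1=\mathrm{Tp}_{a+1,b-1}$, $G_{23}=\mathrm{CC}_{a+1,b-1}$, and $G_3=\mathrm{Tp}_{b,a}$, so that
\[
X_{\mathrm{CC}_{a,b}}=X_{\mathrm{Tp}_{a+1,b-1}}+X_{\mathrm{CC}_{a+1,b-1}}-X_{\mathrm{Tp}_{b,a}}
\qquad (a\ge 2,\ b\ge 3).
\]
The reduction is therefore to \emph{tadpoles}, not to the cycle--cycle products $X_{C_\cdot}X_{C_\cdot}$ you posit; no such products occur in any of the relevant identities (\cref{rec:TPml}, \cref{fml:csf:Ltadpole}, \cref{rec:CC:a:b}). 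Note also that your aside about ``contracting the chord'' has no meaning here: deletion--contraction fails for chromatic symmetric functions in commuting variables, which is precisely why the paper works with \cref{thm:rec:3del}, and with \cref{prop:DC} only in the noncommuting setting.

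To complete your route you would still need to: (i) close the recursion at $b=2$ using $\mathrm{CC}_{n,2}=L(\mathrm{Tp}_{n,1})$ and \cref{rec:Ltadpole:tadpole}, i.e.\ $X_{\mathrm{CC}_{n,2}}=2X_{\mathrm{Tp}_{n,1}}-X_{C_n}X_{P_1}$, and record the degenerate rows $X_{\mathrm{CC}_{n,1}}=X_{\mathrm{CC}_{1,n}}=X_{C_{n+1}}$, since the displayed recurrence only covers $a\ge2$, $b\ge3$; (ii) telescope and sum $\sum_{a,b}x^ay^b$, which requires the bivariate tadpole generating function of \cref{thm:gf:tadpole} --- an ingredient your sketch never invokes, and the place where the $xy/(x-y)^2$ and $1/(x-y)$ factors actually arise; and (iii) check that the result agrees with the claimed symmetric closed form. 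None of this is carried out: every key assertion in the proposal is conditional (``should yield'', ``I would expect'', ``should assemble''). The strategy is viable, and if completed it would constitute a genuinely different proof from the paper's power-sum argument, but until the recurrence above is established and the summation performed, what you have is a plan rather than a proof.
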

\begin{proof}
Let $\mathrm{CC}_{a,b}=(V,E)$, where $V=\{v_0,v_1, \dotsc, v_{a+b-1}\}$
and $E=E_a\cup E_b\cup \{v_0v_a\}$ with
\[
E_a=\{v_0v_1,\,v_1v_2,\,\dotsc, v_{a-1}v_a\}
\quad\text{and}\quad
E_b=\{v_av_{a+1},\,v_{a+1}v_{a+2},\,\dotsc,\,v_{a+b-1}v_0\}.
\]
Since removing the edge $v_0v_a$ from $\mathrm{CC}_{a,b}$ results in 
the cycle $C_{a+b}$, by \cref{prop:csf},
\[
\sum_{v_0v_a\not\in S\subseteq E}(-1)^{\abs{S}}p_{\lambda(S)}
=X_{C_{a+b}}.
\]
When $v_0v_a\in S$, 
let $M$ be the component of the graph $(V,S)$
that contains the edge $v_0v_a$.
Let $a'$ be the number of edges in $M\cap E_a$
and $b'$ the number of edges in $M\cap E_b$.
Then $0\le a'\le a$ and $0\le b'\le b$.
According to whether $a'=a$ and $b'=b$, we obtain 
\begin{align}
X_{\mathrm{CC}_{a,b}}
&=X_{C_{a+b}}
+\sum_{a'=0}^{a-1}\sum_{b'=0}^{b-1}
(-1)^{a'+b'+1}(a'+1)(b'+1)p_{a'+b'+2}
X_{P_{a-a'-1}}X_{P_{b-b'-1}}
+(-1)^{a+b+1}p_{a+b}\notag\\
&\quad
+\sum_{a'=0}^{a-1}(-1)^{a'+b+1}(a'+1)p_{a'+b+1}X_{P_{a-a'-1}}
+\sum_{b'=0}^{b-1}(-1)^{a+b'+1}(b'+1)p_{a+b'+1}X_{P_{b-b'-1}}
\notag\\
&=X_{C_{a+b}}
+\sum_{i=1}^{a}\sum_{j=1}^{b}
(-1)^{i+j+1}\cdotp i\cdotp j\cdotp p_{i+j}X_{P_{a-i}}X_{P_{b-j}}
+(-1)^{a+b+1}p_{a+b}\notag\\
&\quad
+\sum_{i=1}^{a}(-1)^{b+i}\cdotp i\cdotp p_{b+i}X_{P_{a-i}}
+\sum_{j=1}^{b}(-1)^{a+j}\cdotp j\cdotp p_{a+j}X_{P_{b-j}}.
\label{rec:CC:a:b}
\end{align}
Considering the generating function
\[
H(x,y)=\sum_{a,b\ge1}X_{\mathrm{CC}_{a,b}}x^ay^b,
\]
we compute each term 
on the right side of \cref{rec:CC:a:b}
by multiplying $x^ay^b$ and summing over $a,b\ge1$.
For convenience, we will use the substitutions 
\[
s=-x,\quad
t=-s,
\quad\text{and}\quad
Q_z=\frac{F(z)}{E(z)}.
\]
For the first and third term in \cref{rec:CC:a:b}, 
by \cref{prop:gf:path:cycle,gf:p},
\begin{align}
\sum_{a,b\ge1}X_{C_{a+b}}x^a y^b
&=\frac{xy}{x-y}\sum_{h\ge2}X_{C_h}(x^{h-1}-y^{h-1})
=\frac{xy}{x-y}\brk3{\frac{xE''(x)}{F(x)}-\frac{yE''(y)}{F(y)}},
\label{gf:C:a+b}\\
\sum_{a,b\ge1}(-1)^{a+b+1}p_{a+b}x^ay^b
&=-\sum_{a,b\ge1}p_{a+b}s^a t^b
=\frac{xy}{x-y}\sum_{h\ge2}p_h(s^{h-1}-t^{h-1})\notag\\
&=\frac{xy}{x-y}
\brk3{\frac{Q_x-1-e_1 s}{s}-\frac{Q_y-1-e_1 t}{t}}
=\frac{xQ_y-yQ_x}{x-y}-1.\notag
\end{align}
For the fourth term in \cref{rec:CC:a:b},
\begin{align*}
&\sum_{a,b\ge1}\sum_{i=1}^{a}
(-1)^{b+i}\cdotp i\cdotp p_{b+i}X_{P_{a-i}}x^a y^b
=\sum_{b,i\ge1}(-1)^{i}\cdotp i\cdotp p_{b+i}\cdotp t^b
\sum_{a\ge i}X_{P_{a-i}}x^a\\
=&\frac{1}{Q_x}\sum_{b,i\ge1} i\cdotp p_{b+i}\cdotp s^i t^b
=\frac{1}{Q_x}\sum_{h\ge2} 
p_h\cdotp\brk2{st^{h-1}+2s^2t^{h-2}+\dotsb+(h-1)s^{h-1}t}\\
=&\frac{xy}{(x-y)^2Q_x}
\sum_{h\ge2} p_h\cdotp\brk2{hs^h-hs^{h-1}t-s^h+t^h}
=\frac{xy\brk[s]1{(x-y)Q_x'-(Q_x-Q_y)}}{(x-y)^2Q_x}.
\end{align*}
Exchanging $x$ and $y$ and exchanging $a$ and $b$ in the formula above, we obtain 
\[
\sum_{a,b\ge1}\sum_{j=1}^{b}
(-1)^{a+j}\cdotp j\cdotp p_{a+j}X_{P_{b-j}}x^a y^b
=\frac{xy\brk[s]1{(y-x)Q_y'+(Q_x-Q_y)}}{(x-y)^2Q_y}.
\]
For the second term in \cref{rec:CC:a:b}, we compute
\begin{align*}
&\sum_{a,b\ge1}
\sum_{i=1}^{a}\sum_{j=1}^{b}
(-1)^{i+j+1}\cdotp i\cdotp j\cdotp p_{i+j}X_{P_{a-i}}X_{P_{b-j}}
x^a y^b\\
=&-\sum_{i,j\ge1}i\cdotp j\cdotp p_{i+j}
\sum_{a\ge i}X_{P_{a-i}}s^a 
\sum_{b\ge j}X_{P_{b-j}}t^b\\
=&-\frac{1}{Q_x Q_y}
\sum_{i,j\ge1}i\cdotp j\cdotp p_{i+j}\cdotp s^i\cdotp t^j
=-\frac{1}{Q_x Q_y}\sum_{h\ge2}p_h 
\sum_{i=1}^{h-1} i\cdotp (h-i)\cdotp s^i t^{h-i}\\
=&-\frac{st}{(s-t)^3Q_x Q_y}\sum_{h\ge2}p_h 
\brk[s]2{(h-1)\brk1{s^{h+1}-t^{h+1}}-(h+1)st\brk1{s^{h-1}-t^{h-1}}}\\
=&\frac{xy}{(x-y)^3Q_x Q_y}
\brk[s]2{(x+y)(Q_x-Q_y)-(x-y)(xQ_x'+yQ_y')}.
\end{align*}
Adding the five results above up, we obtain
\begin{align*}
H(x,y)
&=\frac{xy}{x-y}\brk3{\frac{xE''(x)}{F(x)}-\frac{yE''(y)}{F(y)}}
+\frac{xQ_y-yQ_x}{x-y}-1\\
&\qquad+\frac{xy\brk[s]1{(x-y)Q_x'-(Q_x-Q_y)}}{(x-y)^2Q_x}
+\frac{xy\brk[s]1{(y-x)Q_y'+(Q_x-Q_y)}}{(x-y)^2Q_y}\\
&\qquad+\frac{xy}{(x-y)^3Q_x Q_y}
\brk[s]2{(x+y)(Q_x-Q_y)-(x-y)(xQ_x'+yQ_y')},
\end{align*}
which simplifies to the desired formula.
\end{proof}

In the procedure of showing the $(e)$-positivity of $Y_{\mathrm{CC}_{m,3}}$,
we will use \cref{prop:DC,lem:relabel} frequently in which we need to relabel the largest vertex. We write
$(d, i)$ to denote the transposition of the elements $d$ and $i$.
For any partition $\pi\in\Pi_d$ and any graph $G$ on $d$ vertices, we write
\begin{itemize}
\item
$\pi\circ (d, i)$ to denote the partition of $[d]$ that interchanges the elements $d$ and $i$ in $\pi$,
\item
$G\circ (d, i)$ to denote the relabeling of $G$ that interchanges the vertex labels $v_d$ and $v_{i}$, and 
\item
$Y_G\circ (d, i)$ to denote the chromatic symmetric function $Y_{G\circ (d, i)}$.
\end{itemize}
It follows that
\[
Y_G=\sum_{\pi \in \Pi_d} c_{\pi}e_{\pi}\quad
\Longrightarrow\quad
Y_G\circ (d, i)=\sum_{\pi \in \Pi_d} c_{\pi}e_{\pi\circ (d, i)}.
\]
Another basic result of the transposition is \cref{prop:ind:relabel:d}.

\begin{proposition}\label{prop:ind:relabel:d}
For any partition $\pi \in \Pi_d$ and any element $i\in [d]$,
\[
e_{\pi\circ (d, i)}\ind \,
\equiv 
\frac{1}{b_{\pi, i}}\brk2{e_{(\pi/d+1)}-e_{(\pi+_{i}(d+1))}}\circ (d, i).
\]
where
$\pi+_i(d+1)$ is the partition of $[d+1]$ formed by inserting the element $d+1$ into the block of $\pi$ that contains $i$.
\end{proposition}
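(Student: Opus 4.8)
The plan is to derive this congruence directly from \cref{prop:ind:e} by substituting $\sigma:=\pi\circ(d,i)$ and then commuting the transposition past the three operations $/(d+1)$, $+(d+1)$, and $\ind$. First I would apply \cref{prop:ind:e} verbatim to the partition $\sigma\in\Pi_d$, which yields
\[
e_{\pi\circ(d,i)}\ind \equiv \frac{1}{b_{\sigma}}\brk2{e_{(\sigma/d+1)}-e_{(\sigma+(d+1))}}.
\]
After this, the entire problem reduces to re-expressing the right-hand side in terms of $\pi$ and the single transposition $(d,i)$.

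The second step is the block-size bookkeeping $b_{\sigma}=b_{\pi,i}$. Since $\sigma=\pi\circ(d,i)$ is obtained from $\pi$ by placing the label $d$ wherever $\pi$ has $i$, the block of $\sigma$ containing $d$ is merely the relabeling of the block of $\pi$ containing $i$, so the two blocks have equal size. The third and combinatorially essential step is to prove two identities of partitions in $\Pi_{d+1}$, where throughout $(d,i)$ is understood to fix the element $d+1$:
\[
\sigma/d+1=(\pi/d+1)\circ(d,i)\qquad\text{and}\qquad \sigma+(d+1)=(\pi+_{i}(d+1))\circ(d,i).
\]
The first holds trivially, because adjoining the singleton $\{d+1\}$ commutes with any transposition that does not move $d+1$. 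The second I would verify by a short case analysis on whether $i$ and $d$ lie in a common block of $\pi$; in either case the block of $\sigma$ containing $d$ is the image under $(d,i)$ of the block $B_i$ of $\pi$ containing $i$, so inserting $d+1$ into it produces precisely the image under $(d,i)$ of the block $B_i\cup\{d+1\}$ created by $\pi+_i(d+1)$, using that $(d,i)$ fixes $d+1$.

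Finally I would substitute these two identities into the displayed congruence and invoke the defining relation $e_{\tau\circ(d,i)}=e_{\tau}\circ(d,i)$ together with linearity to pull the transposition out of the difference, which gives the asserted formula; here the congruence-class subscripts are read through arbitrary representatives, as is legitimate under $\equiv$. I expect the only genuinely delicate point to be the second partition identity in the third step: one must consistently remember that $(d,i)$ interchanges $d$ and $i$ while fixing $d+1$, and treat the same-block and distinct-block configurations of $i$ and $d$ separately. Every remaining manipulation is formal.
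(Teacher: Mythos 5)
Your proposal is correct and is essentially identical to the paper's proof: the paper also applies \cref{prop:ind:e} verbatim to the partition $\pi\circ(d,i)$ and then states that the result ``simplifies to the desired congruence relation.'' Your second and third steps (the identities $b_{\pi\circ(d,i)}=b_{\pi,i}$, $\pi\circ(d,i)/d+1=(\pi/d+1)\circ(d,i)$, and $\pi\circ(d,i)+(d+1)=(\pi+_i(d+1))\circ(d,i)$) are exactly the bookkeeping the paper leaves implicit in that simplification.
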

\begin{proof}
By \cref{prop:ind:e}, we have
\begin{align*}
e_{\pi\circ (d, i)}\ind
&\equiv
\frac{1}{b_{\pi\circ (d, i)}}\brk2{e_{(\pi\circ (d, i)/d+1)}-e_{(\pi\circ (d, i)+(d+1))}},
\end{align*}
which simplifies to the desired congruence relation.
\end{proof}

Note that \cref{prop:ind:relabel:d} reduces to \cref{prop:ind:e} when $i=d$.

Here comes the main result of \cref{sec:Y:CCm3}.
\begin{theorem}\label{thm:Y:CCm3}
For any integer $m\ge 3$, 
the cycle-chord graph $\mathrm{CC}_{m,3}$ that is labeled by adding the edge~$v_2v_{m+1}$ to the cycle $v_1\dotsm v_{m+2}v_1$ is $(e)$-positive.
\end{theorem}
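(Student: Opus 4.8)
The plan is to peel the four-cycle part of $\mathrm{CC}_{m,3}$ off by a single deletion--contraction, to express both resulting graphs through the known nonnegative $(e)$-expansion of $Y_{C_m}$, and then to check that the resulting combination has nonnegative coefficients. Throughout I write $G=\mathrm{CC}_{m,3}$ and $d=m+2$, and I take as the seed the expansion $Y_{C_m}\equiv\sum_{(\rho)\subseteq\Pi_{m-1}}c_{(\rho)}e_{(\rho+m)}$ with all $c_{(\rho)}\ge0$, which is furnished by \cref{prop:path:cycle} together with the positivity in \cref{prop:gf:path:cycle}.

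First I would apply \cref{prop:DC} to the edge $e=v_{m+1}v_{m+2}$ incident to the largest label, obtaining $Y_G=Y_{G\backslash e}-Y_{G/e}\ind$. Inspecting incidences, $G\backslash e$ is the tadpole obtained by attaching the two-edge tail $v_2v_1v_{m+2}$ to the cycle $C_m$ on $v_2,\dots,v_{m+1}$, whereas $G/e$ is the cycle-with-triangle graph $L(\mathrm{Tp}_{m,1})$ in which the apex $v_1$ is joined to the adjacent cycle vertices $v_2$ and $v_{m+1}$. Because in $G\backslash e$ the maximum label $v_{m+2}$ sits at the free end of the tail, \cref{lem:relabel} lets me replace $G\backslash e$ by the standard tadpole; its positive $(e)$-expansion is then produced by attaching the two pendant vertices to $C_m$ one at a time, each attachment governed by the same deletion--contraction that proves the path formula in \cref{prop:path:cycle} (and its positivity is in any case the already recorded $(e)$-positivity of tadpoles). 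This settles the first summand.

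The delicate summand is $Y_{G/e}\ind$. In the previously treated $L(\mathrm{Tp}_{m,1})$ the apex carried the maximum label, whereas in $G/e$ the maximum label $v_{m+1}$ lies on the cycle and the apex $v_1$ is a small label; since the congruence relation remembers the block of the maximum, $Y_{G/e}\not\equiv Y_{L(\mathrm{Tp}_{m,1})}$ and the earlier computation cannot be recycled verbatim. This is exactly the situation the transposition formalism and \cref{prop:ind:relabel:d} are built for: I would realize $G/e$ as a transposition $L(\mathrm{Tp}_{m,1})\circ(d,i)$ of the standard graph, with $i$ the apex label, transport its $e$-expansion through $\circ(d,i)$, and then carry out the induction term by term via \cref{prop:ind:relabel:d}, keeping everything anchored to the $c_{(\rho)}$ by means of \cref{prop:Y:DisjointUnion:G:Km,prop:ind:e}.

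Finally I would substitute the two expansions into $Y_G=Y_{G\backslash e}-Y_{G/e}\ind$, collect the coefficient of each congruence class, and verify its nonnegativity. I expect the main obstacle to be precisely this last bookkeeping. The operation $\circ(d,i)$ is not well defined on congruence classes, because it can alter the block size of the maximum, so the induction in \cref{prop:ind:relabel:d} must be tracked on honest partitions rather than on classes; and one must then show that, after the outer subtraction in \cref{prop:DC} and the inner subtraction $e_{(\pi/d+1)}-e_{(\pi+_i(d+1))}$ inside the induction formula, every surviving coefficient is a nonnegative multiple of some $c_{(\rho)}$. I would organize this verification by a case split according to how the blocks containing $m$, $m+1$, and $m+2$ merge, after first checking the small base cases (beginning with $m=3$) directly to license the reduction.
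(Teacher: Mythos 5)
Your structural setup coincides exactly with the paper's: the same deletion--contraction at $e=v_{m+1}v_{m+2}$, the same identification of $G\backslash e$ as the tadpole $\mathrm{Tp}_{m,2}$ (with $v_{m+2}$ at the free end of the tail) and of $G/e$ as $L(\mathrm{Tp}_{m,1})$ with a low-labeled apex, and the same diagnosis that the misplaced apex is what \cref{prop:ind:relabel:d} and the transposition formalism are for. But what you have written is a strategy, not a proof: it stops precisely where the content of \cref{thm:Y:CCm3} begins. Since $(e)$-positivity is not preserved by subtraction, once you write $Y_G=Y_{G\backslash e}-Y_{G/e}\ind$ everything hinges on expressing both terms over one common family of congruence classes and exhibiting the cancellations that leave nonnegative coefficients. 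You defer exactly this step as ``bookkeeping'' that you ``expect'' to be the main obstacle; that bookkeeping \emph{is} the theorem, and the identical plan applied to a graph that is not $e$-positive fails only at that stage. In particular, it is not enough to invoke the known $(e)$-positivity of tadpoles for the first summand: the tadpole term must be produced in the same coordinates as the subtracted term, which in the paper forces two further applications of \cref{prop:DC}, turning the right-hand side into $Y_{\mathrm{Tp}_{m,1}\uplus K_1}+Y_{C_m}\ind\ind-Y_{C_m\uplus K_1}\ind-Y_{L(\mathrm{Tp}_{m,1})'}\ind$ before any coefficients can be compared.

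The deferred work is substantial and not routine. The paper needs a standalone lemma (\cref{lem:lineTp}) for $Y_{L(\mathrm{Tp}_{m,1})'}$, whose proof uses three more deletion--contractions, the specific choice $i=m-1$ in \cref{prop:ind:relabel:d}, the coefficient relation $c'_{(\rho)}=(b_{\rho}-1)c_{(\rho)}$ linking the expansions of $Y_{P_{m-1}}$ and $Y_{C_{m-1}}$, and the auxiliary partition $\rho'=\rho\backslash\{m-1\}+m+(m+1)/m-1$, which survives into the final answer. Moreover the intermediate expansions are \emph{not} positive --- \cref{eq:Y:LTpm1} contains the genuinely negative term $-\tfrac{b_{\rho}-1}{b_{\rho}(b_{\rho}+1)}e_{(\rho+m/m+1)}$ --- and nonnegativity appears only after all four pieces are summed, when the coefficient of each class collapses to $\tfrac{c_{(\rho)}}{b_{\rho}+1}$ times one of $\tfrac{b_{\rho}-1}{b_{\rho}+2}$, $\tfrac{b_{\rho}^2-b_{\rho}+1}{b_{\rho}}$, $\tfrac{3}{b_{\rho}+2}$, $\tfrac{b_{\rho}-1}{b_{\rho}}$. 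Nothing in your outline shows, or even makes plausible, that these cancellations occur; ``collect the coefficient of each congruence class and verify its nonnegativity'' is an empty box where this computation should be. (A minor point in the same direction: checking $m=3$ by hand buys you nothing, since the argument is uniform in $m$ once the seed expansion \cref{eq:Y:P:m-1} with $c_{(\rho)}\ge 0$ from \cref{prop:path:cycle} is fixed; no induction on $m$ is involved.) To turn your proposal into a proof you would have to carry out, in some form, the analogue of \cref{lem:lineTp} and the final four-term bookkeeping.
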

\begin{proof}
By \cref{prop:DC},
\begin{align*}
Y_{\mathrm{CC}_{m,3}}=
Y_{\mathrm{Tp}_{m,2}}-Y_{L(\mathrm{Tp}_{m,1})}\ind,
\end{align*}
see the left part of \cref{fig:CCm3} for the vertex labeling of $\mathrm{CC}_{m,3}$. 
\begin{figure}[htbp]
\begin{tikzpicture}[rotate=90, scale=.4]
\input{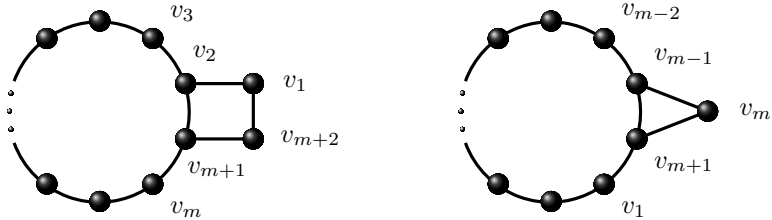}
\end{tikzpicture}
\caption{The cycle-chord graph $\mathrm{CC}_{m,3}$ and the line graph $Y_{L(\mathrm{Tp}_{m,1})'}$.}
\label{fig:CCm3}
\end{figure}

We relabel the vertices of the tadpole $\mathrm{Tp}_{m,2}$
so that it is obtained by adding the edge $v_1v_m$ in the path $v_1\dotsm v_{m+2}$.
Write the resulting graph as $\mathrm{Tp}_{m,2}'$. 
We also relabel the vertices of the line graph $L(\mathrm{Tp}_{m,1})$
so that it is obtained by adding the edge $v_{m-1}v_{m+1}$ in the cycle $v_1\dotsm v_{m+1}v_1$.
Write the resulting graph as~$L(\mathrm{Tp}_{m,1})'$, see the right part of \cref{fig:CCm3}.
By \cref{lem:relabel} and by applying \cref{prop:DC} another twice, we can deduce that
\begin{align}
Y_{\mathrm{CC}_{m,3}}
&\equiv_{m+2} 
Y_{\mathrm{Tp}_{m,2}'}-Y_{L(\mathrm{Tp}_{m,1})'}\ind\notag\\
&\equiv_{m+2} 
\brk1{Y_{\mathrm{Tp}_{m,1}\,\uplus K_1}-Y_{\mathrm{Tp}_{m,1}}\ind}-Y_{L(\mathrm{Tp}_{m,1})'}\ind\notag\\
&\equiv_{m+2} 
Y_{\mathrm{Tp}_{m,1}\,\uplus K_1}-(Y_{C_m\,\uplus K_1}-Y_{C_m}\ind)\ind-Y_{L(\mathrm{Tp}_{m,1})'}\ind\notag\\
&\equiv_{m+2} 
Y_{\mathrm{Tp}_{m,1}\,\uplus K_1}+Y_{C_m}\ind \ind-Y_{C_m\,\uplus K_1}\ind-Y_{L(\mathrm{Tp}_{m,1})'}\ind.\label{pf:Y:CCm2}
\end{align}
We proceed by computing the four terms in \cref{pf:Y:CCm2} separately.
In order to compute the last term,
we pause here and give a lemma for computing 
$Y_{L(\mathrm{Tp}_{m,1})'}$.

\begin{lemma}\label{lem:lineTp}
If \cref{eq:Y:P:m-1} holds,
then
\begin{equation}\label{eq:Y:LTpm1}
Y_{L(\mathrm{Tp}_{m,1})'}
\equiv
\sum_{(\rho)\subseteq \Pi_{m-1}}c_{(\rho)}
\brk3{
\frac{2}{b_{\rho}+1}e_{(\rho+m+(m+1))}-
\frac{b_{\rho}-1}{b_{\rho}(b_{\rho}+1)}e_{(\rho+m/m+1)}
+\frac{b_{\rho}-1}{b_{\rho}}
e_{(\rho')}},
\end{equation}
where $\rho'\in\Pi_{m+1}$ is the partition
$\rho\backslash \{m-1\}+m+(m+1)/m-1$.
\end{lemma}

\begin{proof}[Proof of \cref{lem:lineTp}]
By using \cref{prop:DC}, we obtain
\[
Y_{C_m}=Y_{P_m}-Y_{C_{m-1}}\ind.
\]
It follows that
\begin{align}\label{pf:YCm-1}
Y_{C_{m-1}}\ind\ind=Y_{P_m}\ind-Y_{C_m}\ind.
\end{align}
Let $i\in [m-1]$. It will be carefully set to facilliate the computation in the sequel. By using \cref{prop:DC} thrice and using \cref{pf:YCm-1}, we can deduce
\begin{align}
Y_{L(\mathrm{Tp}_{m,1})'}
&\equiv
Y_{\mathrm{Tp}_{m,1}\circ (m, m-1)}-Y_{C_m}\ind\notag\\
&\equiv
Y_{P_{m+1}}-Y_{\mathrm{Tp}_{m-1,1}}\ind-Y_{C_m}\ind\notag\\
&\equiv
Y_{P_{m+1}}-\brk1{Y_{(C_{m-1}\,\uplus K_1)\circ (m, i)}-Y_{C_{m-1}}\ind} \ind-Y_{C_m}\ind\notag\\
&\equiv
Y_{P_{m+1}}-Y_{C_{m-1}\,\uplus K_1}\circ (m, i)\ind+Y_{P_m}\ind-2Y_{C_m}\ind, \label{pf:Y:LTpm1}
\end{align}
where the graph $C_{m-1}\,\uplus K_1$ is labeled by $C_{m-1}=v_1\dotsm v_{m-1}v_1$ and $V(K_1)=\{v_m\}$.
We will calculate the four terms in \cref{pf:Y:LTpm1} independently.
Suppose that
\begin{align*}
Y_{P_{m-2}}
&\equiv
 \sum_{(\tau)\subseteq \Pi_{m-2}}h_{(\tau)}e_{(\tau)},\\
Y_{P_{m-1}}
&\equiv
\sum_{(\rho)\subseteq \Pi_{m-1}}c_{(\rho)}e_{(\rho)}\quad \text{and}\\
Y_{C_{m-1}}
&\equiv
\sum_{(\rho)\subseteq \Pi_{m-1}}c'_{(\rho)}e_{(\rho)}.
\end{align*}
Then by \cref{prop:path:cycle}, 
\begin{align*}
c_{(\rho)}&=
\begin{cases}
\displaystyle\frac{h_{(\tau)}}{b_{\tau}}(b_{\tau}-1),&\text{if $\rho=\tau/(m-1)$},\\[10pt]
\displaystyle\frac{h_{(\tau)}}{b_{\rho}-1},&\mbox{if $\rho=\tau+(m-1)$},\\[9pt]
0,&\mbox{otherwise},
\end{cases}\\
c'_{(\rho)}&=
\begin{cases}
h_{(\tau)},&\text{if $\rho=\tau+(m-1)$},\\[9pt]
0,&\text{otherwise}.
\end{cases}
\end{align*}
It follows that 
$c'_{(\rho)}=h_{(\tau)}=(b_{\rho}-1)c_{(\rho)}$,
if $\rho=\tau+(m-1)$.
%\[
%c'_{(\rho)}=h_{(\tau)}=(b_{\rho}-1)c_{(\rho)},\quad\text{if $\rho=\tau+(m-1)$.}
%\]
By using \cref{prop:Y:DisjointUnion:G:Km,prop:ind:relabel:d}, we obtain
\begin{align*}
Y_{C_{m-1}\,\uplus K_1}\circ (m, i)\ind
\equiv
&\sum_{(\rho)\subseteq \Pi_{m-1}}c'_{(\rho)}e_{(\rho/ m)}\circ (m, i)\ind\\
\equiv
&\sum_{(\rho)\subseteq \Pi_{m-1}}\frac{c'_{(\rho)}}{b_{\rho/m,\,i}}\brk2{e_{(\rho/ m/m+1))}-e_{\rho/m+_{i}(m+1)}} \circ (m, i)\\
\equiv
&\sum_{(\rho)\subseteq \Pi_{m-1}}
\frac{(b_{\rho}-1)c_{(\rho)}}{b_{\rho,i}}\brk2{e_{(\rho /m/m+1)}-e_{\rho+_{i}(m+1)/m}}\circ (m, i).
\end{align*}
Taking $i=m-1$, we obtain
\begin{align}
Y_{C_{m-1}\,\uplus K_1}\circ (m, i)\ind
\equiv
&\sum_{(\rho)\subseteq \Pi_{m-1}}
\frac{(b_{\rho}-1)c_{(\rho)}}{b_{\rho}}
\brk2{e_{(\rho /m/m+1)}-e_{(\rho')}}.\label{pf:Y:Cm-1K1:ind}
\end{align}
On the other hand, by \cref{prop:path:cycle}, we obtain 
\begin{align}\label{pf2:Pm}
Y_{P_m}
&\equiv
\sum_{(\rho)\subseteq \Pi_{m-1}}
\frac{c_{(\rho)}}{b_{\rho}}
\brk2{(b_{\rho}-1)e_{(\rho/m)}+e_{(\rho+m)}}\quad\text{and}\\
Y_{C_m}\label{pf2:Cm}
&\equiv
\sum_{(\rho)\subseteq \Pi_{m-1}}
c_{(\rho)}e_{(\rho+m)}.
\end{align}
By \cref{prop:path:cycle}, we obtain
\begin{align}
Y_{P_{m+1}}
\equiv
&\sum_{(\rho)\subseteq \Pi_{m-1}}
\frac{(b_{\rho}-1)c_{(\rho)}}{b_{\rho}b_{\rho/m}}
\brk3{(b_{\rho/m}-1)e_{(\rho/m/m+1)}+e_{(\rho/m+(m+1))}}\notag\\
&\qquad+\sum_{(\rho)\subseteq \Pi_{m-1}}
\frac{c_{(\rho)}}{b_{\rho}b_{\rho+m}}
\brk3{(b_{\rho+m}-1)e_{(\rho+m/m+1)}+e_{(\rho+m+(m+1))}}\notag\\
\equiv
&\sum_{(\rho)\subseteq \Pi_{m-1}}c_{(\rho)}
\brk3{\frac{b_{\rho}-1}{b_{\rho}}e_{(\rho/\{m,m+1\})}+
\frac{1}{b_{\rho}+1}e_{(\rho+m/m+1)}+
\frac{1}{b_{\rho}(b_{\rho}+1)}e_{(\rho+m+(m+1))}}.\label{pf:Y:Pm+1}
\end{align}
Applying the induction to \cref{pf2:Pm,pf2:Cm} respectively, and by \cref{prop:ind:e}, we obtain
\begin{align}
Y_{P_m}\ind
\equiv
&\sum_{(\rho)\subseteq \Pi_{m-1}}
\frac{c_{(\rho)}}{b_{\rho}}
\brk2{(b_{\rho}-1)e_{(\rho/m)}\ind +
e_{(\rho+m)}\ind}\notag\\
\equiv
&\sum_{(\rho)\subseteq \Pi_{m-1}}
\frac{c_{(\rho)}}{b_{\rho}}
(b_{\rho}-1)
\brk2{e_{(\rho/m/m+1)}-e_{(\rho/\{m,m+1\})}}\notag\\
&\qquad+\sum_{(\rho)\subseteq \Pi_{m-1}}
\frac{c_{(\rho)}}{b_{\rho}(b_{\rho}+1)}
\brk2{e_{(\rho+m/m+1)}-e_{(\rho+m+(m+1))}},
\quad \text{and}\label{pf:Y:Pm:ind}\\
Y_{C_m}\ind
\equiv 
&\sum_{(\rho)\subseteq \Pi_{m-1}}
c_{(\rho)}e_{(\rho+m)}\ind
\equiv
\sum_{(\rho)\subseteq \Pi_{m-1}}\frac{c_{(\rho)}}{b_{\rho}+1}
\brk2{e_{(\rho+m/m+1)}-e_{(\rho+m+(m+1))}}.\label{pf:Y:Cm:ind}
\end{align}
Therefore, substituting \cref{pf:Y:Cm-1K1:ind,pf:Y:Pm+1,pf:Y:Pm:ind,pf:Y:Cm:ind} into \cref{pf:Y:LTpm1} with $i=m-1$, we obtain
\begin{align*}
Y_{L(\mathrm{Tp}_{m,1})'}
\equiv
&Y_{P_{m+1}}-Y_{C_{m-1}\,\uplus K_1}\circ (m, m-1)\ind+Y_{P_m}\ind-2Y_{C_m}\ind\\
\equiv
&\sum_{(\rho)\subseteq \Pi_{m-1}}c_{(\rho)}
\brk2{
\frac{b_{\rho}-1}{b_{\rho}}e_{(\rho/m/m+1)}+
\frac{2}{b_{\rho}+1}e_{(\rho+m+(m+1))}-
\frac{b_{\rho}-1}{b_{\rho}(b_{\rho}+1)}e_{(\rho+m/m+1)}
}\\
&-\sum_{(\rho)\subseteq \Pi_{m-1}}
\frac{(b_{\rho}-1)c_{(\rho)}}{b_{\rho}}
\brk2{e_{(\rho /m/m+1)}-e_{(\rho')}},
\end{align*}
which simplifies to \cref{eq:Y:LTpm1}.
\end{proof}

Now we are in a position to complete the proof of 
\cref{thm:Y:CCm3}.
Applying the induction to \cref{eq:Y:LTpm1}, and by  \cref{prop:ind:e}, we can compute
\begin{align*}
Y_{L(\mathrm{Tp}_{m,1})'}\ind
\equiv
&\sum_{(\rho)\subseteq \Pi_{m-1}}c_{(\rho)}
\brk3{
\frac{2}{b_{\rho}+1}e_{(\rho+m+(m+1))}\ind-
\frac{b_{\rho}-1}{b_{\rho}(b_{\rho}+1)}e_{(\rho+m/m+1)}\ind
+\frac{b_{\rho}-1}{b_{\rho}}e_{(\rho')}\ind}
\\
\equiv
&\sum_{(\rho)\subseteq \Pi_{m-1}}c_{(\rho)}
\biggl(
\frac{2}{(b_{\rho}+1)(b_{\rho}+2)}
\brk2{e_{(\rho+m+(m+1)/m+2)}
-e_{(\rho+m+(m+1)+(m+2))}}\\
&\qquad\qquad-\frac{b_{\rho}-1}{b_{\rho}(b_{\rho}+1)}
\brk2{e_{(\rho+m/m+1/m+2)}
-e_{(\rho+m/(m+1)+(m+2))}}\\
&\qquad\qquad+\frac{b_{\rho}-1}{b_{\rho}(b_{\rho}+1)}\brk2{e_{(\rho'/m+2)}
-e_{(\rho'+(m+2))}}\biggr)\\
\equiv
&\sum_{(\rho)\subseteq \Pi_{m-1}}c_{(\rho)}
\biggl(
\frac{2}{(b_{\rho}+1)(b_{\rho}+2)}\brk2{e_{(\rho+m+(m+1)/m+2)}-e_{(\rho+m+(m+1)+(m+2))}}\\
&\qquad\qquad+\frac{b_{\rho}-1}{b_{\rho}(b_{\rho}+1)}\brk2{e_{(\rho+m/(m+1)+(m+2))}-e_{(\rho'+(m+2))}}\biggr).
\end{align*}

Suppose that
\cref{eq:Y:P:m-1} holds.
By \cref{prop:Y:DisjointUnion:G:Km,pf2:Cm}, we obtain
\begin{equation}\label{Y:CmK1}
Y_{C_m\,\uplus K_1}
\equiv
\sum_{(\rho)\subseteq \Pi_{m-1}}
c_{(\rho)}e_{(\rho+m/m+1)}.
\end{equation}
Applying \cref{prop:DC} to the graph $\mathrm{Tp}_{m,1}$, 
and by using \cref{Y:CmK1,pf:Y:Cm:ind}, we can deduce that
\begin{align*}
Y_{\mathrm{Tp}_{m,1}}
&=
Y_{C_m\,\uplus K_1}-Y_{C_m}\ind\\
&\equiv
\sum_{(\rho)\subseteq \Pi_{m-1}}
c_{(\rho)}e_{(\rho+m/m+1)}
-\sum_{(\rho)\subseteq \Pi_{m-1}}\frac{c_{(\rho)}}{b_{\rho}+1}
\brk2{e_{(\rho+m/m+1)}-e_{(\rho+m+(m+1))}}\\
&\equiv
\sum_{(\rho)\subseteq \Pi_{m-1}}
\frac{c_{(\rho)}}{b_{\rho}+1}
\brk2{b_{\rho}e_{(\rho+m/m+1)}+
e_{(\rho+m+(m+1))}}.
\end{align*}
It follows by \cref{prop:Y:DisjointUnion:G:Km} that
\begin{align*}
Y_{\mathrm{Tp}_{m,1}\,\uplus K_1}
\equiv
\sum_{(\rho)\subseteq \Pi_{m-1}}
\frac{c_{(\rho)}}{b_{\rho}+1}
\brk2{b_{\rho}e_{(\rho+m/m+1/m+2)}+
e_{(\rho+m+(m+1)/m+2)}}.
\end{align*}
Applying the induction to \cref{Y:CmK1,pf:Y:Cm:ind} respectively, and by \cref{prop:ind:e}, we c an infer that
\begin{align*}
Y_{C_m}\ind\ind
\equiv
&\sum_{(\rho)\subseteq \Pi_{m-1}}
\frac{c_{(\rho)}}{b_{\rho}+1}
\brk2{e_{(\rho+m/m+1)}\ind-e_{(\rho+m+(m+1))}\ind}\\
\equiv
&\sum_{(\rho)\subseteq \Pi_{m-1}}
\brk2{e_{(\rho+m/m+1/m+2)}-e_{(\rho+m/(m+1)+(m+2))}}\\
&-\sum_{(\rho)\subseteq \Pi_{m-1}}
\frac{c_{(\rho)}}{(b_{\rho}+1)(b_{\rho}+2)}
\brk2{e_{(\rho+m+(m+1)/m+2)}-e_{(\rho+m+(m+1)+(m+2))}}\quad\text{and}
\\
Y_{C_m\,\uplus K_1}\ind
\equiv
&\sum_{(\rho)\subseteq \Pi_{m-1}}
c_{(\rho)}e_{(\rho+m/m+1)}\ind\\
\equiv
&\sum_{(\rho)\subseteq \Pi_{m-1}}
c_{(\rho)}
\brk2{e_{(\rho+m/m+1/m+2)}-e_{(\rho+m/(m+1)+(m+2))}}.
\end{align*}
Now we can continue calculating $Y_{\mathrm{CC}_{m,3}}$ by \cref{pf:Y:CCm2} as follows.
\begin{align*}
Y_{\mathrm{CC}_{m,3}}
&\equiv
Y_{\mathrm{Tp}_{m,1}\,\uplus K_1}+Y_{C_m}\ind \ind-Y_{C_m\,\uplus K_1}\ind-Y_{L(\mathrm{Tp}_{m,1})'}\ind\\
&\equiv\sum_{(\rho)\subseteq \Pi_{m-1}}
\frac{c_{(\rho)}}{b_{\rho}+1}
\brk2{b_{\rho}e_{(\rho+m/m+1/m+2)}+
e_{(\rho+m+(m+1)/m+2)}}\\
&\qquad+\sum_{(\rho)\subseteq \Pi_{m-1}}
\frac{c_{(\rho)}}{b_{\rho}+1}
\brk2{e_{(\rho+m/m+1/m+2)}-e_{(\rho+m/(m+1)+(m+2))}}\\
&\qquad-\sum_{(\rho)\subseteq \Pi_{m-1}}
\frac{c_{(\rho)}}{(b_{\rho}+1)(b_{\rho}+2)}
\brk2{e_{(\rho+m+(m+1)/m+2)}-e_{(\rho+m+(m+1)+(m+2))}}\\
&\qquad-\sum_{(\rho)\subseteq \Pi_{m-1}}
c_{(\rho)}
\brk2{e_{(\rho+m/m+1/m+2)}-e_{(\rho+m/(m+1)+(m+2))}}\\
&\qquad-\sum_{(\rho)\subseteq \Pi_{m-1}}c_{(\rho)}
\biggl(
\frac{2}{(b_{\rho}+1)(b_{\rho}+2)}\brk2{e_{(\rho+m+(m+1)/m+2)}-e_{(\rho+m+(m+1)+(m+2))}}\\
&\qquad\qquad\qquad+\frac{b_{\rho}-1}{b_{\rho}(b_{\rho}+1)}\brk2{e_{(\rho+m/(m+1)+(m+2))}-e_{(\rho'+(m+2))}}\biggr)\\
&\equiv
\sum_{(\rho)\subseteq \Pi_{m-1}}
\frac{c_{(\rho)}}{b_{\rho}+1}
\biggl(
\frac{b_{\rho}-1}{b_{\rho}+2}
e_{(\rho+m+(m+1)/m+2)}+
\frac{b_{\rho}^2-b_{\rho}+1}{b_{\rho}}
e_{(\rho+m/(m+1)+(m+2))}\\
&\qquad\qquad\qquad+
\frac{3}{b_{\rho}+2}
e_{(\rho+m+(m+1)+(m+2))}
+\frac{b_{\rho}-1}{b_{\rho}}
e_{(\rho'+(m+2))}\biggr).
\end{align*}
Since $b_{\rho}\ge 1$ for all $\rho\in \Pi_{m-1}$, we obtain that $Y_{\mathrm{CC}_{m,3}}$ is $(e)$-positive.
\end{proof}

\begin{corollary}
The cycle-chord graphs $\mathrm{CC}_{m,3}$ are $e$-positive for any integer $m\ge 3$.
\end{corollary}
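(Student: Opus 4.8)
The plan is to obtain the $e$-positivity as an immediate consequence of the $(e)$-positivity already in hand. The key observation is that \cref{thm:Y:CCm3} produces a specific vertex labeling of $\mathrm{CC}_{m,3}$ under which $Y_{\mathrm{CC}_{m,3}}$ expands with nonnegative coefficients over the congruence classes $e_{(\pi)}$; that is, $\mathrm{CC}_{m,3}$ is $(e)$-positive. Since \cref{prop:(e)toe} asserts that every $(e)$-positive graph is $e$-positive, I would simply feed the conclusion of \cref{thm:Y:CCm3} into \cref{prop:(e)toe} to finish.

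In carrying this out I would first make explicit why the dependence of the noncommuting invariant on the labeling causes no trouble here. The function $Y_G$ is sensitive to the vertex labeling, so $(e)$-positivity is a property of a labeled graph; the ordinary chromatic symmetric function $X_G$, however, is labeling-independent, and the proof of \cref{prop:(e)toe} shows that $X_G=\sum_{(\pi)}c_{(\pi)}\lambda(\pi)!\,e_{\lambda(\pi)}$ whenever \cref{eq:equiv:Y} holds with nonnegative $c_{(\pi)}$. Hence exhibiting one good labeling, as \cref{thm:Y:CCm3} does, already forces $X_{\mathrm{CC}_{m,3}}$ to be $e$-positive regardless of how the vertices are named.

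There is essentially no obstacle left at this stage: all of the genuine combinatorial work, namely the repeated deletion--contraction reductions, the relabeling bookkeeping governed by \cref{lem:relabel}, and the explicit evaluation of $Y_{L(\mathrm{Tp}_{m,1})'}\!\ind$ carried out in \cref{lem:lineTp}, has been absorbed into the proof of \cref{thm:Y:CCm3}. The corollary is therefore a one-line deduction, and the only point to be careful about is to cite \cref{thm:Y:CCm3,prop:(e)toe} in the correct logical order, rather than attempting to reprove positivity directly from the generating function of \cref{thm:gf:cycle-chord}, which is not needed for this statement.
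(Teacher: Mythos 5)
Your proposal is correct and matches the paper's proof exactly: the corollary is deduced in one line by combining \cref{thm:Y:CCm3} with \cref{prop:(e)toe}. Your additional remark on why the labeling-dependence of $Y_G$ is harmless (since $X_G$ is labeling-independent and the expansion in the proof of \cref{prop:(e)toe} only needs one good labeling) is accurate but not needed beyond the two citations.
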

\begin{proof}
Immediate by \cref{prop:(e)toe,thm:Y:CCm3}.
\end{proof}

\bibliography{../csf}

\end{document}